\documentclass[11pt]{article}

\usepackage[margin=1in]{geometry}
\usepackage{amsmath,amssymb,amsthm,mathtools}
\usepackage{array}
\usepackage{mathrsfs}
\usepackage{enumitem}
\usepackage{microtype}
\usepackage{xcolor}
\usepackage{hyperref}
\usepackage[nameinlink]{cleveref}

\hypersetup{
  colorlinks=true,
  linkcolor=blue,
  citecolor=red,
  urlcolor=green,
  pdftitle={A Complete Characterization of Sequential Testability and Change Detectability in i.i.d. Models},
  pdfauthor={Aaditya Ramdas},
  pdfkeywords={sequential testing, power-one tests, e-processes, quickest change detection, optional-horizon false-alarm control, e-detectors, average run length}
}

\newtheorem{theorem}{Theorem}[section]
\newtheorem{proposition}[theorem]{Proposition}
\newtheorem{lemma}[theorem]{Lemma}
\newtheorem{corollary}[theorem]{Corollary}
\theoremstyle{definition}
\newtheorem{definition}[theorem]{Definition}
\newtheorem{remark}[theorem]{Remark}
\newtheorem{example}[theorem]{Example}
\numberwithin{equation}{section}

\newcommand{\Pcal}{\mathcal P}
\newcommand{\Qcal}{\mathcal Q}
\newcommand{\X}{\mathsf X}
\newcommand{\B}{\mathcal B}
\newcommand{\F}{\mathcal F}
\newcommand{\E}{\mathbb E}
\newcommand{\Prob}{\mathbb P}
\newcommand{\M}{\mathcal M}
\newcommand{\KL}{\mathrm{KL}}
\newcommand{\TV}{\mathrm{TV}}
\newcommand{\R}{\mathbb R}
\newcommand{\one}{\mathbf 1}
\newcommand{\Eproc}{\mathcal E}
\newcommand{\Detector}{\mathcal D}
\newcommand{\Sproc}{\mathcal S}

\DeclareMathOperator{\size}{size}
\DeclareMathOperator{\pow}{power}

\title{A complete characterization of sequential testability\\ and change detectability in i.i.d. models}
\author{Aaditya Ramdas\\Department of Statistics, Stanford University\\\texttt{aramdas@stanford.edu}}
\date{\today}

\begin{document}
\maketitle

\begin{abstract}
We give a necessary and sufficient condition for the existence of power-one sequential tests in an i.i.d. composite testing problem.  A level-\(\alpha\) test with power one against every alternative exists if and only if the alternatives are separated from the null by a countable family of finite-block events.  We provide other equivalent conditions using randomized fixed-sample tests, bounded finite-block scores, e-processes, reduced-filtration test supermartingales, and a countable cover whose finite-block weak-$*$ closed convex hulls are positively separated in total variation. 
As a bonus, the constructive proof yields tests have pointwise expected sample size \(O_Q(\log(1/\alpha))\).
Exactly the same conditions also characterize i.i.d.\ change detectability under optional-horizon average-run-length control: for every \(\eta>0\), they are equivalent to an alarm family \((T_\gamma)_{\gamma\ge1}\) satisfying \(\Prob_{P^\infty}(T_\gamma\le\sigma)\le \E_{P^\infty}\sigma/\gamma\) for every null law and every stopping time \(\sigma\). In fact, when these conditions hold, we can construct a single e-detector such that every null-law average run length lies between \(\gamma\) and \((1+\eta)\gamma+1\), and having robust Lorden delay \(O_Q(\log\gamma)\).  
\end{abstract}

\medskip
\noindent\textbf{Keywords:} sequential testing; power-one tests; e-processes; quickest change detection; optional-horizon false-alarm control; e-detectors; average run length; weak-$*$ convex geometry.

\section{Introduction}\label{sec:introduction}

Sequential tests allow the sample size to be chosen from the data while maintaining a prescribed type-I error probability.  In favorable cases such tests can have \emph{power one}: under every alternative of interest, the test rejects in finite time almost surely.  Classical examples go back to Wald's sequential likelihood-ratio test and to the work of Robbins, Darling, Siegmund, Lai, and collaborators on power-one procedures and confidence sequences \cite{wald1945,darling-robbins-1967,darling-robbins-1968,robbins-siegmund-1970,lai1977}.  Modern anytime-valid inference expresses many such procedures through test martingales, nonnegative supermartingales, or e-processes.

Larsson et al.~\cite{larsson-ruf-ramdas-2026} extended classical work by Le Cam and Kraft to recently derive a complete characterization of (nonsequential) testability: when testing any composite $\Pcal$ against any composite $\Qcal$, they give an assumption-free geometric characterization of when there exists a test with minimax risk strictly less than one (power strictly exceeds the level). This paper asks for (and achieves) an analogous exact existence criterion for sequential testing in the i.i.d. setting. 

Let \(\Pcal\) be a null class of distributions on a measurable sample space \((\X,\B)\), and let \(\Qcal\) be an alternative class, both always assumed nonempty.  We seek a single stopping rule \(\tau\) such that
\[
  \sup_{P\in\Pcal}P^\infty(\tau<\infty)\le \alpha,
  \qquad
  Q^\infty(\tau<\infty)=1\quad\text{for every }Q\in\Qcal.
\]
The main result states that such a test exists if and only if the alternatives can be separated from the null by countably many finite-sample inequalities.  One form of the condition is that there are events \(B_m\in\B^{\otimes n_m}\) such that every \(Q\in\Qcal\) satisfies
\[
  Q^{n_m}(B_m)>\sup_{P\in\Pcal}P^{n_m}(B_m)
\]
for at least one \(m\).  Equivalently, one may use randomized fixed-sample tests whose power exceeds their worst-case null size, or bounded finite-block scores whose expectation is uniformly nonpositive under the null but positive under each alternative for at least one block.

The proof is constructive.  A separating finite-block score gives a blockwise betting factor with null expectation at most one and positive logarithmic growth under the alternative it separates.  A countable mixture over all blocks and rational betting fractions yields an e-process that diverges under every alternative.  Conversely, any power-one sequential test yields a countable family of finite-sample events by taking the rejection events \(\{\tau\le n\}\).

The pointwise strict gaps in the main theorem need not be uniform over the full alternative class.  Nevertheless, splitting alternatives according to rational lower bounds on those gaps yields a countable cover by uniformly separated subfamilies.  The unrestricted fixed-sample minimax identity then gives an equivalent geometric characterization in terms of positive total-variation distance between finite-block weak-$*$ closed convex hulls.  We state this as a corollary immediately after the main theorem, thereby separating the operational content of the theorem from its geometric representation.

The same question has a change-detection counterpart.  Suppose the observations are i.i.d. from some \(P\in\Pcal\) before an unknown changepoint and i.i.d. from some \(Q\in\Qcal\) afterward.  We formulate false-alarm validity operationally: an alarm family \((T_\gamma)_{\gamma\ge1}\) is optionally ARL-valid if
\[
  \Prob_{P^\infty}(T_\gamma\le\sigma)
  \le \frac{\E_{P^\infty}\sigma}{\gamma}
\]
for every \(P\in\Pcal\), every \(\gamma\ge1\), and every stopping time \(\sigma\).  This condition implies the conventional ARL guarantee \(\inf_{P\in\Pcal}\E_{P^\infty}T_\gamma\ge\gamma\) and rules out front-loaded false alarms hidden behind a long right tail.  By fixed-scale universality of e-detectors \cite{ramdas-edetector-universality-2026}, it is equivalent, for each \(\gamma\), to representability as a level-\(\gamma\) crossing of an e-detector.  We show that the finite-block condition is necessary and sufficient for an optionally ARL-valid family to have pointwise \(O_Q(\log\gamma)\) worst-case detection delay.  The constructive direction gives the stronger coherent conclusion that one e-detector generates the entire family.  Using an independent mean-one calibration clock, this detector can be chosen, for any prescribed \(\eta>0\), so that every null-law average run length lies between \(\gamma\) and \((1+\eta)\gamma+1\).  In the calibrated formulation, each fixed post-change law is eventually detected in expected time strictly smaller than the robust average run length, and a single uniformly separating finite-block certificate gives the same conclusion uniformly over \(\Qcal\).

The examples isolate the roles of finite block length, countability, and pointwise rather than uniform separation.  The same simple scores also yield universal sequential goodness-of-fit, two-sample, and independence tests whenever the underlying sigma-algebras admit countable determining classes.  These applications are collected after the change-detection theorem.

\paragraph{Contributions.}
The main contributions are as follows.
\begin{itemize}[leftmargin=*,itemsep=2pt,topsep=3pt]
\item We give an assumption-free characterization, apart from the i.i.d. sampling model, of when a composite null is sequentially testable with power one against every member of a composite alternative.  The criterion is equivalent to countably many finite-block events, randomized tests, or bounded scores with pointwise positive alternative expectation.
\item We turn these finite-block certificates into both a single divergent e-process and a single test supermartingale on a deterministic reduced-time filtration, and conversely recover a countable family of certificates from any power-one stopping rule.  Equivalent conditions use bounded estimable law functionals of finite degree.  The resulting level-\(\alpha\) tests have finite pointwise expected sample size \(O_Q(\log(1/\alpha))\), while a geometric corollary gives a countable weak-$*$ closed-hull cover at positive total-variation distance.
\item We prove that exactly the same condition characterizes i.i.d. change detectability under optional-horizon ARL control: for every \(\eta>0\), it is equivalent to an optionally ARL-valid family for which every null-law ARL lies in \([\gamma,(1+\eta)\gamma+1]\) and the robust Lorden delay is \(O_Q(\log\gamma)\), as well as to pointwise sub-ARL delay.  E-detector universality shows that this is an operational alarm-time criterion rather than a constructional restriction, and our construction produces one e-detector for all thresholds.
\item We characterize the uniform regime as well: one uniform finite-block gap is equivalent to uniform consistency and a uniform \(O(\log(1/\alpha))\) expected-sample-size bound.  We also show that two-observation certificates suffice whenever testability holds on a countable discrete sample space, as well as for every alternative outside a weakly compact null on a Polish space, and derive universal goodness-of-fit, two-sample, and independence-testing consequences.
\end{itemize}

\paragraph{Paper outline.}
\Cref{sec:setup} introduces the testing and e-process setup.  \Cref{sec:main} states and proves the finite-block characterization, gives its weak-$*$ geometric form, and develops examples clarifying finite-block and countability phenomena.  \Cref{sec:change-detection} introduces optional-horizon ARL control and e-detectors and proves the equivalent change-detection characterization, including pointwise and uniform delay formulations.  \Cref{sec:applications} records the goodness-of-fit, two-sample, and independence-testing applications, and \Cref{sec:conclusion} concludes with further directions.

\paragraph{Related work.}
Sequential tests of power one appear in classical work of  Robbins, Darling, Siegmund and Lai \cite{darling-robbins-1967,darling-robbins-1968,robbins-siegmund-1970,robbins-siegmund-1974,lai1977}.  General lower and upper bounds for their stopping times are developed by Agrawal, Ram and Ramdas \cite{agrawal-ramdas-2025}.  E-values, test martingales, and e-processes provide a modern language for optional stopping and anytime-valid inference \cite{shafer2011,howard2021,ramdas2023,grunwald2024}; composite e-processes and Ville-type characterizations are developed in \cite{ruf2023}.  Questions about when nontrivial p-values, e-values, or bounded e-variables exist for composite hypotheses are studied by Zhang, Ramdas and Wang and by Larsson, Ramdas and Ruf \cite{zhang-ramdas-wang-2024,larsson-ramdas-ruf-constraints-2026}.  The unrestricted fixed-sample closure theorem of Larsson, Ruf and Ramdas \cite{larsson-ruf-ramdas-2026} supplies the geometric identity used below, while recent GROW duality identifies the analogous weak-$*$ geometry for worst-case logarithmic growth \cite{ram-larsson-ruf-ramdas-grow-2026}.

Two recent papers are especially close.  Ram and Ramdas \cite{ram-ramdas-2026} prove that on a Polish sample space every weakly compact null class admits a power-one sequential test against its complement, construct a divergent e-process, and show that weak compactness is sufficient but not necessary.  Their later work \cite{ram-ramdas-optimal-growth-2026} gives, without compactness assumptions, a necessary-and-sufficient condition for power-one testing against a fixed point alternative, characterizes optimal betting growth, and emphasizes the sufficiency of test supermartingales on reduced filtrations.  The present theorem identifies the additional countable-assembly condition that is necessary and sufficient for one procedure to work over an arbitrary composite alternative; it also constructs a fine-time divergent e-process, proves the optional-horizon change-detection equivalence, and yields one common e-detector for the full threshold family.  Proposition~\ref{prop:weak-compact-degree-two} gives an independent short proof of the qualitative weak-compactness result and strengthens it by showing that bounded continuous two-observation certificates always suffice.  Proposition~\ref{prop:continuous-assembly} isolates the countable-assembly mechanism, while Example~\ref{ex:countable-cocountable} shows why regularity of the measurable space cannot simply be omitted.

A growing constructive literature develops sequential nonparametric tests through betting.  Shekhar and Ramdas study nonparametric sequential two-sample testing using predictive approximations to variational witnesses \cite{shekhar-ramdas-2024}.  Kernel-based and prediction-based sequential independence and two-sample tests are developed by Podkopaev et al. and Podkopaev and Ramdas \cite{podkopaev-et-al-2023,podkopaev-ramdas-2023}; a rank-based sequential independence test is given by Henzi and Law \cite{henzi-law-2024}.  Broader constructions include deep anytime-valid tests for operator-defined null hypotheses \cite{pandeva-et-al-2024} and model-X sequential conditional-independence tests by betting \cite{shaer-et-al-2023}.  These works provide concrete algorithms and quantitative guarantees for important structured problems.  Our focus is complementary: we characterize, without domination, compactness, or parametric assumptions on \(\Pcal\) and \(\Qcal\), exactly when some power-one procedure exists.

Quickest change detection has a long classical lineage.  Page's CUSUM, Shiryaev's Bayesian rule, and the Shiryaev--Roberts procedure are foundational constructions \cite{page1954,shiryaev1963,roberts1966}.  Lorden and Pollak introduced influential worst-case and conditional-delay criteria, and Moustakides proved exact CUSUM optimality under Lorden's formulation \cite{lorden1971,pollak1985,moustakides1986}.  Standard book-length treatments include Basseville and Nikiforov, Poor and Hadjiliadis, and Tartakovsky, Nikiforov and Basseville \cite{basseville-nikiforov-1993,poor-hadjiliadis-2008,tartakovsky-et-al-2014}; Veeravalli and Banerjee give a modern survey of Bayesian and minimax theory and its extensions, while Tartakovsky treats general non-i.i.d. models \cite{veeravalli-banerjee-2014,tartakovsky-2020}.  Most of this literature seeks optimal or asymptotically optimal procedures within specified stochastic models.  Shin, Ramdas and Rinaldo \cite{shin2024} introduced e-detectors, which convert restarted e-processes into nonparametric change detectors with nonasymptotic ARL guarantees.  Ramdas \cite{ramdas-edetector-universality-2026} proves that the original e-detectors are universal exactly for optional-horizon false-alarm control, while weak e-detectors are universal for bare ARL control.  This distinction lets us state the present characterization directly in terms of alarm-time validity; our construction strengthens the scale-by-scale representation theorem by producing one common e-detector.  Related assumption-light reductions from confidence sequences or sequential estimation to change detection appear in \cite{shekhar-ramdas-backward-2023,shekhar-ramdas-reduction-2024}, sharp composite lower bounds and matching bounded-mean results are developed in \cite{ram-ramdas-bounded-means-2026}, and non-partitioned e-detectors are studied in \cite{saha-ramdas-2026}.  The present paper identifies the exact countable finite-block property equivalent both to power-one sequential consistency and, in the i.i.d. setting, to pointwise change detectability under optional-horizon ARL control.

\section{Setup}\label{sec:setup}

Let \((\X,\B)\) be a measurable space and let \(\M_1(\X)\) denote the set of probability measures on it.  Let \(\Pcal\subseteq\M_1(\X)\) be a null class and \(\Qcal\subseteq\M_1(\X)\) an alternative class.  For \(R\in\M_1(\X)\), write \(R^n\) and \(R^\infty\) for the i.i.d. product laws on \(\X^n\) and \(\X^{\mathbb N}\).  Let \((X_i)_{i\ge1}\) be the coordinate process, let \(\F_n=\sigma(X_1,\ldots,X_n)\) for \(n\ge1\), and let \(\F_0\) be the trivial sigma-algebra.

\begin{definition}[Sequential tests]
A sequential test is a stopping time \(\tau:\X^{\mathbb N}\to\{1,2,\ldots\}\cup\{\infty\}\) with respect to \((\F_n)\).  It has level at most \(\alpha\in(0,1)\) for \(\Pcal\) if
\[
  \sup_{P\in\Pcal}P^\infty(\tau<\infty)\le \alpha.
\]
It has power one against \(\Qcal\) if
\[
  Q^\infty(\tau<\infty)=1\qquad\text{for every }Q\in\Qcal.
\]
\end{definition}

\begin{definition}[E-processes]
An e-process for \(\Pcal\) is an adapted process \(\Eproc=(\Eproc_n)_{n\ge0}\) with values in \([0,\infty]\) such that, for every \(P\in\Pcal\) and every \((\F_n)_{n\ge0}\)-stopping time \(\tau\) taking values in \(\{0,1,\ldots\}\cup\{\infty\}\),
\[
  \E_{P^\infty}\Eproc_\tau\le 1,
\]
where \(\Eproc_\infty:=\liminf_{n\to\infty}\Eproc_n\).  We do not require \(\Eproc_0=1\); the displayed inequality implies \(\Eproc_0\le1\).  For every deterministic \(n\), an e-process is finite \(P^\infty\)-almost surely for each \(P\in\Pcal\).  If \(\limsup_n\Eproc_n=\infty\) under \(Q^\infty\), then \(\inf\{n:\Eproc_n\ge1/\alpha\}\) is a level-\(\alpha\), power-one test against \(Q\).
\end{definition}

\begin{lemma}[Bounded stopping times suffice]\label{lem:bounded-stopping-eprocess}
Let \(\Eproc=(\Eproc_n)_{n\ge0}\) be nonnegative and adapted, set \(\Eproc_\infty=\liminf_n\Eproc_n\), and fix \(P\in\Pcal\).  If \(\E_{P^\infty}\Eproc_\sigma\le1\) for every bounded stopping time \(\sigma\), then the same inequality holds for every stopping time \(\tau\).
\end{lemma}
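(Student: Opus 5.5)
The plan is to approximate an arbitrary stopping time \(\tau\) (possibly infinite) by the bounded stopping times \(\tau\wedge N\) and pass to the limit with Fatou's lemma. For each \(N\in\mathbb N\), the time \(\sigma_N:=\tau\wedge N\) is a bounded stopping time, so the hypothesis gives
\[
  \E_{P^\infty}\Eproc_{\tau\wedge N}\le 1 .
\]
Fatou's lemma then yields
\[
  \E_{P^\infty}\Bigl[\liminf_{N\to\infty}\Eproc_{\tau\wedge N}\Bigr]\le \liminf_{N\to\infty}\E_{P^\infty}\Eproc_{\tau\wedge N}\le 1 .
\]
This step uses only nonnegativity of \(\Eproc\); values in \([0,\infty]\) are allowed. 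What remains is to identify \(\liminf_N \Eproc_{\tau\wedge N}\) with \(\Eproc_\tau\) pointwise.

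Split the sample space according to whether \(\tau\) is finite.
\begin{itemize}
\item On \(\{\tau<\infty\}\), for all \(N\ge\tau\) we have \(\Eproc_{\tau\wedge N}=\Eproc_\tau\), so the liminf equals \(\Eproc_\tau\).
\item On \(\{\tau=\infty\}\), \(\Eproc_{\tau\wedge N}=\Eproc_N\), so the liminf is \(\liminf_N\Eproc_N=\Eproc_\infty=\Eproc_\tau\) by the convention in the statement.
\end{itemize}
Hence \(\liminf_N\Eproc_{\tau\wedge N}=\Eproc_\tau\) everywhere, and \(\E_{P^\infty}\Eproc_\tau\le1\).

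The only point needing care is measurability of \(\Eproc_\tau\), including on \(\{\tau=\infty\}\). I would handle this by writing
\[
  \Eproc_\tau=\sum_{n<\infty}\Eproc_n\one\{\tau=n\}+\Eproc_\infty\one\{\tau=\infty\},
\]
where \(\Eproc_\infty\) is measurable as a liminf of measurable functions. Alternatively, the pointwise identity above exhibits \(\Eproc_\tau\) directly as a liminf of measurable functions.

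I do not expect a real obstacle here. The choice \(\Eproc_\infty=\liminf\), rather than \(\limsup\), is exactly what makes Fatou's lemma applicable without any uniform integrability.
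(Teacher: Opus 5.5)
Your proposal is correct and matches the paper's proof: you establish the pathwise identity \(\liminf_{n}\Eproc_{\tau\wedge n}=\Eproc_\tau\) (the sequence is eventually constant on \(\{\tau<\infty\}\) and equals the defining liminf on \(\{\tau=\infty\}\)), then apply Fatou's lemma to the bounded stopping times \(\tau\wedge n\). Your remark on the measurability of \(\Eproc_\tau\) adds a little care that the paper leaves implicit.
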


\begin{proof}
The identity \(\liminf_{n\to\infty}\Eproc_{\tau\wedge n}=\Eproc_\tau\) holds pathwise: on \(\{\tau<\infty\}\) the sequence is eventually constant, and on \(\{\tau=\infty\}\) it is the defining liminf.  Fatou's lemma therefore gives
\[
  \E_{P^\infty}\Eproc_\tau
  \le \liminf_{n\to\infty}\E_{P^\infty}\Eproc_{\tau\wedge n}
  \le1.
\]
\end{proof}

For an event \(B\in\B^{\otimes n}\), define its worst-case null probability
\[
  \pi_n(B):=\sup_{P\in\Pcal}P^n(B).
\]
For a randomized fixed-sample test \(\varphi:\X^n\to[0,1]\), define
\[
  \size(\varphi):=\sup_{P\in\Pcal}\E_{P^n}\varphi,
  \qquad
  \pow_Q(\varphi):=\E_{Q^n}\varphi.
\]
For a deterministic test \(\psi=\one_B\), these are \(\pi_n(B)\) and \(Q^n(B)\).

A law functional \(F:\M_1(\X)\to\R\) is called \emph{bounded estimable of degree at most \(n\)} if
\[
  F(R)=\E_{R^n}h
\]
for some bounded measurable symmetric kernel \(h:\X^n\to\R\).  This is the classical notion of an estimable functional underlying the theory of U-statistics \cite{halmos1946,hoeffding1948}.

\section{Main theorem}\label{sec:main}

\begin{theorem}[Finite-block characterizations of power-one testing]\label{thm:main}
The following statements are equivalent.
\begin{enumerate}[label=(\roman*)]
\item For some \(\alpha\in(0,1)\), there exists a level-\(\alpha\) sequential test with power one against every \(Q\in\Qcal\).

\item For every \(\alpha\in(0,1)\), there exists a level-\(\alpha\) sequential test with power one against every \(Q\in\Qcal\).

\item There exists an e-process \(\Eproc=(\Eproc_n)_{n\ge0}\) for \(\Pcal\) such that
\[
  \Eproc_n\to\infty
  \qquad Q^\infty\text{-almost surely for every }Q\in\Qcal.
\]

\item[(iii$'$)] There exists an e-process \(\Eproc=(\Eproc_n)_{n\ge0}\) for \(\Pcal\) such that
\[
  \limsup_{n\to\infty}\Eproc_n=\infty
  \qquad Q^\infty\text{-almost surely for every }Q\in\Qcal.
\]

\item There exists a countable family \((n_m,B_m)_{m\ge1}\), with \(n_m\ge1\) and \(B_m\in\B^{\otimes n_m}\), such that for every \(Q\in\Qcal\) there is an \(m\) satisfying
\[
  Q^{n_m}(B_m)>\pi_{n_m}(B_m).
\]

\item There exists a countable family \((n_m,\varphi_m)_{m\ge1}\) of randomized fixed-sample tests, \(\varphi_m:\X^{n_m}\to[0,1]\), such that for every \(Q\in\Qcal\) there is an \(m\) satisfying
\[
  \pow_Q(\varphi_m)>\size(\varphi_m).
\]
Equivalently, after enumerating rational declared levels, there is a countable family \((n_m,\varphi_m,q_m)\) with \(q_m\in\mathbb Q\cap[0,1]\) such that for every \(Q\in\Qcal\) there is an \(m\) satisfying
\[
  \size(\varphi_m)\le q_m<\pow_Q(\varphi_m).
\]

\item There exists a countable family \((n_m,h_m)_{m\ge1}\) of bounded measurable scores \(h_m:\X^{n_m}\to\R\) such that
\[
  \sup_{P\in\Pcal}\E_{P^{n_m}}h_m\le0
  \quad\text{for every }m,
\]
and for every \(Q\in\Qcal\) there is an \(m\) satisfying
\[
  \E_{Q^{n_m}}h_m>0.
\]

\item There exists a countable family \((F_m)_{m\ge1}\) of bounded estimable law functionals of finite degrees such that
\[
  F_m(P)\le0
  \quad\text{for every }P\in\Pcal\text{ and every }m,
\]
and every \(Q\in\Qcal\) satisfies \(F_m(Q)>0\) for at least one \(m\).

\item There exist deterministic nondecreasing integers
\[
  0=t_0\le t_1\le t_2\le\cdots,
  \qquad t_k\le k,
  \qquad \frac{t_k}{k}\longrightarrow1,
\]
and a nonnegative process \(\Sproc=(\Sproc_k)_{k\ge0}\), adapted to the full-data filtration \((\F_{t_k})_{k\ge0}\), such that \(\Sproc_0=1\), and for every \(P\in\Pcal\) the process is integrable and a supermartingale under \(P^\infty\), and
\[
  \Sproc_k\longrightarrow\infty
  \qquad Q^\infty\text{-almost surely for every }Q\in\Qcal.
\]
\end{enumerate}
\end{theorem}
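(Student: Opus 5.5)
The plan is to close a cycle of implications in which each trivial or nearly trivial step is dispatched first, leaving one genuine construction from the scores in (vi). For (ii)\(\Rightarrow\)(i) there is nothing to prove. For (iii)\(\Rightarrow\)(iii\('\)) there is nothing to prove either. For (iii\('\))\(\Rightarrow\)(ii), take \(\tau=\inf\{n:\Eproc_n\ge1/\alpha\}\). The e-process property gives \(P^\infty(\tau<\infty)\le\alpha\,\E_{P^\infty}\Eproc_\tau\le\alpha\), and \(\limsup\Eproc_n=\infty\) gives \(\tau<\infty\) \(Q^\infty\)-a.s.

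For (i)\(\Rightarrow\)(iv), take \(n_m=m\) and \(B_m=\{\tau\le m\}\in\B^{\otimes m}\). Then \(\pi_m(B_m)\le\alpha\). For each \(Q\in\Qcal\) we have \(Q^m(B_m)\uparrow Q^\infty(\tau<\infty)=1>\alpha\), so some \(m\) separates. The chain (iv)\(\Rightarrow\)(v)\(\Rightarrow\)(vi) runs as follows.
\begin{itemize}[leftmargin=*,itemsep=2pt,topsep=3pt]
\item Indicators are randomized tests, which gives (iv)\(\Rightarrow\)(v).
\item For the rational-level form of (v), enumerate pairs \((\varphi_m,q)\) with \(q\in\mathbb Q\) and keep those with \(\size(\varphi_m)\le q\).
\item Set \(h=\varphi_m-q\), which gives (v)\(\Rightarrow\)(vi).
\end{itemize}
For (vi)\(\Leftrightarrow\)(vii), replace \(h_m\) by its symmetrization over permutations of the \(n_m\) coordinates. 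This leaves \(\E_{R^{n_m}}h_m\) unchanged for every i.i.d. law \(R\), so \(F_m(R):=\E_{R^{n_m}}h_m\) is bounded estimable, and conversely.

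The substantive step is (vi)\(\Rightarrow\)(iii). Rescale so that \(|h_m|\le1\). Index components by pairs \(j=(m,\lambda)\) with \(\lambda\in\mathbb Q\cap(0,1/2]\). Component \(j\) is the block product
\[
W^{j}_i=\prod_{b=1}^{i}\bigl(1+\lambda h_m(\text{block }b)\bigr)
\]
over consecutive disjoint blocks of length \(n_m\). Under each \(P\in\Pcal\) the blocks are i.i.d. with \(\E h_m\le0\), so \(W^j\) is a nonnegative supermartingale in block time. Under \(Q\) with \(\E_{Q^{n_m}}h_m=\delta>0\), we have \(\E\log(1+\lambda h_m)\ge\lambda\delta-\lambda^2>0\) for small rational \(\lambda\). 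The strong law then gives \(W^j_i\to\infty\) exponentially fast.

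Define \(\Eproc_n=\tfrac12\sum_j w_jW^j_{\lfloor n/n_m\rfloor}\) with summable positive weights. The obstacle here is that \(\lfloor\tau/n_m\rfloor\) is not a block stopping time. I would handle it by noting that \(1+\lambda h\ge1/2\), so \(W^j_{\lfloor\tau/n_m\rfloor}\le2W^j_{\lceil\tau/n_m\rceil}\). Since \(\lceil\tau/n_m\rceil\) is a block stopping time, optional stopping for bounded \(\tau\) together with \Cref{lem:bounded-stopping-eprocess} gives \(\E_{P^\infty}\Eproc_\tau\le1\). Divergence is a genuine limit, not merely a limsup, because the separating component grows at every fine time.

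For (viii), I would show (vi)\(\Rightarrow\)(viii)\(\Rightarrow\)(ii). For the second implication, reject at \(t_{\kappa}\) with \(\kappa=\inf\{k:\Sproc_k\ge1/\alpha\}\). Ville's inequality for the \((\F_{t_k})\)-supermartingale gives the level, and divergence gives power one. For the first implication, the same supermartingale-at-partial-block issue appears, and it cannot be fixed by a factor of \(2\) because we now need a true supermartingale. The plan is to choose \(t_k\) so that every active component sits at a block boundary at each \(t_k\).
\begin{itemize}[leftmargin=*,itemsep=2pt,topsep=3pt]
\item Order the components as \(j=1,2,\ldots\) and let \(N_J=\operatorname{lcm}(n_1,\ldots,n_J)\).
\item Use epochs \(J=1,2,\ldots\) of rapidly increasing lengths. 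During epoch \(J\), set \(t_k\) equal to the largest multiple of \(N_J\) not exceeding \(k\).
\item Component \(J\) is frozen at wealth \(1\) until the start of epoch \(J\), which is a multiple of \(N_J\), and only afterwards starts betting.
\end{itemize}
Then at every \(t_k\) each active component has completed exactly an integer number of blocks. The mixture \(\Sproc_k=\sum_jw_jW^j(t_k)\), with \(\sum_j w_j=1\), is a \(P^\infty\)-supermartingale with \(\Sproc_0=1\). Taking epoch lengths much larger than \(N_J\) ensures \(k-t_k\le N_J=o(k)\), so \(t_k/k\to1\). Each separating component started at a finite time, so \(\Sproc_k\to\infty\) under every \(Q\). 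I expect the bookkeeping of this schedule to be the main technical obstacle.
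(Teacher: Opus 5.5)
Your proposal is correct. It closes the same cycle of implications as the paper, and the routine steps coincide; the differences lie in the two constructions.

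For (vi)$\Rightarrow$(iii), the paper runs each block product on all $N_m$ offset grids and sets it to zero off its grid, so that the grid index of a bounded stopping time is a grid stopping time. It then averages over offsets, so at time $t$ a component equals $N_m^{-1}$ times the product on the grid whose block has just closed. You instead hold the offset-zero product constant between block boundaries. You absorb the failure of $\lfloor\tau/n_m\rfloor$ to be a block stopping time through $W_{\lfloor\tau/n_m\rfloor}\le 2W_{\lceil\tau/n_m\rceil}$, which is legitimate because you rescaled to $|h_m|\le1$ and took $\lambda\le1/2$. Your route is shorter, loses a factor $2$ rather than $1/N_m$, and even gives a finite-valued process. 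The paper's offset device, by contrast, needs no uniform lower bound on the factors.

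For (vi)$\Rightarrow$(viii), the paper interleaves every component from the outset through $t_k=\sum_i\lfloor k/d_i\rfloor N_i$ with $d_i=2^iN_i$. This feeds each component disjoint fresh blocks, giving it an asymptotic data fraction $2^{-i}$, and $t_k\le k$, $t_k/k\to1$ follow from $\sum_i2^{-i}=1$. Your epoch schedule instead activates components one at a time on grids aligned at multiples of $N_J=\operatorname{lcm}(n_1,\dots,n_J)$, and lets all active components bet on the same observations. This works for two reasons. First, $N_J\mid N_{J+1}$ and activation at a multiple of $N_J$ keep every active component at a block boundary at each $t_k$, including across epoch transitions, where the newly added component is still frozen at $1$. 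Second, a countable positive mixture of supermartingales for the common filtration $(\F_{t_k})$ is again one. Your schedule gives each component full per-observation growth once it is active. The price is possibly enormous activation delays and coarse inspection times, which do not matter for the qualitative statement.

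Three small items to record:
\begin{itemize}
\item In (viii)$\Rightarrow$(ii), $t_\kappa$ is an $(\F_n)$-stopping time, since $\{t_\kappa\le n\}=\{\kappa\le K(n)\}$ with $K(n)=\max\{k:t_k\le n\}<\infty$.
\item Also in (viii)$\Rightarrow$(ii), $t_\kappa\ge1$, because whenever $t_k=0$ the variable $\Sproc_k$ is a constant bounded by $\Sproc_0=1<1/\alpha$. The paper checks both of these explicitly.
\item In (iii), normalize the weights so that $\sum_jw_j\le1$.
\end{itemize}
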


\paragraph{Proof roadmap.}
The forward direction extracts the countable rejection events \(\{\tau\le n\}\) from any power-one stopping rule and then centers their randomized-test versions to obtain bounded finite-block scores.  The reverse direction turns each score into positive blockwise betting factors, verifies optional-stopping validity on every block offset, averages those offsets to obtain a fine-time e-process, and mixes the countable family.  Conditions~\textup{(vii)} and \textup{(viii)} record, respectively, the estimable-functional and reduced-time-supermartingale forms of the same construction.

\begin{proof}
We prove
\[
  (iii)\Rightarrow(iii')\Rightarrow(ii)\Rightarrow(i)\Rightarrow(iv)
  \Rightarrow(v)\Rightarrow(vi)\Rightarrow(iii),
\]
and then \((vi)\Longleftrightarrow(vii)\) and \((vi)\Rightarrow(viii)\Rightarrow(ii)\).

The implication \((iii)\Rightarrow(iii')\) is immediate.  Assume \((iii')\) and define
\[
  \tau_\alpha:=\inf\{n:\Eproc_n\ge1/\alpha\}.
\]
Since \(\Eproc_0\le1<1/\alpha\), one has \(\tau_\alpha\ge1\).  The e-process property gives
\[
  \frac1\alpha P^\infty(\tau_\alpha<\infty)
  \le \E_{P^\infty}\Eproc_{\tau_\alpha}\le1
\]
for every \(P\in\Pcal\).  The limsup condition forces \(\tau_\alpha<\infty\) \(Q^\infty\)-almost surely for every \(Q\in\Qcal\).  This proves (ii), and \((ii)\Rightarrow(i)\) is immediate.

Assume (i), and let \(\tau\) be a level-\(\alpha\), power-one sequential test for some \(\alpha<1\).  For every \(n\ge1\), the event \(\{\tau\le n\}\in\F_n\) can be written as \(\{(X_1,\ldots,X_n)\in B_n\}\) for some \(B_n\in\B^{\otimes n}\).  For every \(P\in\Pcal\),
\[
  P^n(B_n)=P^\infty(\tau\le n)
  \le P^\infty(\tau<\infty)\le\alpha,
\]
so \(\pi_n(B_n)\le\alpha\).  For every \(Q\in\Qcal\),
\[
  Q^n(B_n)=Q^\infty(\tau\le n)
  \uparrow Q^\infty(\tau<\infty)=1.
\]
Thus, for each fixed \(Q\), some \(n\) satisfies \(Q^n(B_n)>\alpha\ge\pi_n(B_n)\), proving (iv).

The implication \((iv)\Rightarrow(v)\) follows by taking \(\varphi_m=\one_{B_m}\).  For \((v)\Rightarrow(vi)\), set
\[
  h_m:=\varphi_m-\size(\varphi_m).
\]
Then \(h_m\) is bounded, \(\sup_{P\in\Pcal}\E_{P^{n_m}}h_m\le0\), and the strict power--size gap is exactly \(\E_{Q^{n_m}}h_m>0\).  For the rational-level formulation, replace each \(\varphi_m\) by copies indexed by all \(q\in\mathbb Q\cap[\size(\varphi_m),1]\), and enumerate the resulting countable family.  Whenever \(\pow_Q(\varphi_m)>\size(\varphi_m)\), density of the rationals supplies one such \(q\) with \(\size(\varphi_m)\le q<\pow_Q(\varphi_m)\).  The converse is immediate.

We prove \((vi)\Rightarrow(iii)\).  Write \(N_m=n_m\), let
\(a_m:=\inf h_m\), and put \(H_m:=\lVert h_m\rVert_\infty\).  Define
\[
  \mathcal K_m:=
  \begin{cases}
    \mathbb Q\cap(0,-1/a_m),&a_m<0,\\
    \mathbb Q\cap(0,\infty),&a_m\ge0.
  \end{cases}
\]
For \(\lambda\in\mathcal K_m\), the block factor
\[
  G_{m,\lambda}:=1+\lambda h_m
\]
is strictly positive and bounded, and
\[
  \E_{P^{N_m}}G_{m,\lambda}
  =1+\lambda\E_{P^{N_m}}h_m\le1
  \qquad(P\in\Pcal).
\]

Fix \(s\in\{0,1,\ldots,N_m-1\}\).  Put \(\Pi^{m,\lambda,s}_0=1\) and, for \(j\ge1\),
\[
  \Pi^{m,\lambda,s}_j
  :=\prod_{r=1}^{j}
  G_{m,\lambda}\bigl(
    X_{s+(r-1)N_m+1},\ldots,X_{s+rN_m}
  \bigr).
\]
Under every \(P\in\Pcal\), this is a nonnegative supermartingale for the grid filtration \(\mathcal G^{m,s}_j:=\F_{s+jN_m}\).  We now embed this grid process in ordinary time by retaining its value on the grid and setting it to zero elsewhere; the stopping-time validity of this device is checked directly below.  Define
\[
  W^{m,\lambda,s}_t:=
  \begin{cases}
    \Pi^{m,\lambda,s}_j,&t=s+jN_m\text{ for some }j\ge0,\\
    0,&\text{otherwise.}
  \end{cases}
\]
For a bounded stopping time \(\sigma\), let \(J=j\) on \(\{\sigma=s+jN_m\}\) and \(J=\infty\) if \(\sigma\) does not lie on the grid.  This is a stopping time for \((\mathcal G_j^{m,s})\), since for every \(j\),
\[
  \{J\le j\}=\bigcup_{r=0}^j\{\sigma=s+rN_m\}
  \in\F_{s+jN_m}=\mathcal G_j^{m,s}.
\]
For every integer \(K\), optional sampling and nonnegativity give
\[
  \E_{P^\infty}
  \bigl[\Pi^{m,\lambda,s}_J\one\{J\le K\}\bigr]
  \le \E_{P^\infty}\Pi^{m,\lambda,s}_{J\wedge K}\le1.
\]
Letting \(K\to\infty\) and using monotone convergence yields
\[
  \E_{P^\infty}W^{m,\lambda,s}_\sigma
  =\E_{P^\infty}
  \bigl[\Pi^{m,\lambda,s}_J\one\{J<\infty\}\bigr]
  \le1.
\]
Lemma~\ref{lem:bounded-stopping-eprocess} shows that each \(W^{m,\lambda,s}\) is an e-process.  Moreover, for every bounded stopping time \(\sigma\), linearity gives
\[
  \E_{P^\infty}V^{m,\lambda}_\sigma\le1,
  \qquad
  V^{m,\lambda}_t
  :=\frac1{N_m}\sum_{s=0}^{N_m-1}W^{m,\lambda,s}_t.
\]
A further application of Lemma~\ref{lem:bounded-stopping-eprocess} shows that \(V^{m,\lambda}\) is an e-process.  This bounded-stopping-time argument is needed because the liminf convention at time infinity does not commute with averaging.

Choose positive weights \(w_{m,\lambda}\) whose sum is at most one and define the possibly extended-valued process
\[
  \Eproc_t:=\sum_m\sum_{\lambda\in\mathcal K_m}
  w_{m,\lambda}V^{m,\lambda}_t.
\]
For every bounded stopping time \(\sigma\), Tonelli's theorem gives
\[
  \E_{P^\infty}\Eproc_\sigma
  \le\sum_m\sum_{\lambda\in\mathcal K_m}w_{m,\lambda}\le1.
\]
Lemma~\ref{lem:bounded-stopping-eprocess} proves that \(\Eproc\) is an e-process.  In particular, it is finite at every deterministic time almost surely under each null; allowing \(+\infty\) off the null is harmless.

\emph{Alternative-wise divergence.}
Fix \(Q\in\Qcal\), and choose \(m\) with
\(\mu_m:=\E_{Q^{N_m}}h_m>0\).  Because \(\mu_m>0\), the score \(h_m\) is not identically zero and hence \(H_m>0\).  There is therefore a rational \(\lambda\in\mathcal K_m\) such that
\[
  \lambda H_m\le\frac12,
  \qquad
  \lambda H_m^2\le\frac{\mu_m}{2}.
\]
The elementary inequality \(\log(1+u)\ge u-u^2\), valid for \(|u|\le1/2\), then gives
\[
  g_{m,\lambda}:=\E_{Q^{N_m}}\log G_{m,\lambda}
  \ge \lambda\mu_m-\lambda^2\E_{Q^{N_m}}h_m^2
  \ge\frac{\lambda\mu_m}{2}>0.
\]
For every offset \(s\), the strong law for independent blocks gives
\[
  \frac1j\log\Pi^{m,\lambda,s}_j
  \longrightarrow g_{m,\lambda}>0
  \qquad Q^\infty\text{-almost surely.}
\]
The convergence holds simultaneously for the finitely many offsets.  Hence, for every \(M>0\), there is an almost surely finite \(J_M\) such that \(\Pi_j^{m,\lambda,s}\ge N_mM\) for every offset \(s\) and every \(j\ge J_M\).  At every ordinary time \(t\), exactly one offset contributes, namely
\[
  V^{m,\lambda}_t
  =\frac1{N_m}
  \Pi^{m,\lambda,\,t\bmod N_m}_{\lfloor t/N_m\rfloor}.
\]
Since \(\lfloor t/N_m\rfloor\to\infty\), the preceding simultaneous bound shows that \(V^{m,\lambda}_t\to\infty\), and therefore
\(\Eproc_t\to\infty\), \(Q^\infty\)-almost surely.

The equivalence \((vi)\Longleftrightarrow(vii)\) follows directly from the definition of an estimable functional.  Indeed, for a bounded kernel \(h:\X^n\to\R\), its symmetrization
\[
  h_{\mathrm{sym}}(x_1,\ldots,x_n)
  :=\frac1{n!}\sum_{\pi}h(x_{\pi(1)},\ldots,x_{\pi(n)})
\]
has the same expectation as \(h\) under every product law \(R^n\), and is bounded, measurable, and symmetric.

We next prove \((vi)\Rightarrow(viii)\).  Enumerate a countable collection of the factors just constructed as \((G_i,N_i)_{i\ge1}\), retaining enough factors that every \(Q\in\Qcal\) has positive expected log factor for at least one \(i\).  Choose \(v_i>0\) with \(\sum_i v_i=1\), put \(d_i=2^iN_i\), and define, for \(k\ge0\),
\[
  t_k:=\sum_{i\ge1}\left\lfloor\frac{k}{d_i}\right\rfloor N_i.
\]
At macro-stage \(k\ge1\), let \(I_k=\{i:d_i\mid k\}\).  This set is finite because \(d_i\ge2^i\).  Update every component in \(I_k\), and leave the others unchanged.  More precisely, order \(I_k\) increasingly and partition the consecutive fresh observations
\[
  X_{t_{k-1}+1},\ldots,X_{t_k}
\]
into consecutive blocks of lengths \((N_i)_{i\in I_k}\) in that order, assigning the length-\(N_i\) block to component \(i\).  This is possible because
\[
  t_k-t_{k-1}=\sum_{i\in I_k}N_i.
\]
Thus all blocks used at a stage are mutually disjoint, all observations exposed through time \(t_k\) have been allocated, and the new blocks are independent of \(\F_{t_{k-1}}\).  Moreover,
\[
  t_k\le k\sum_{i\ge1}2^{-i}=k.
\]
For every fixed \(I\),
\[
  \frac{t_k}{k}
  \ge \sum_{i=1}^I2^{-i}-\frac1k\sum_{i=1}^IN_i,
\]
so \(\liminf_k t_k/k\ge1-2^{-I}\).  Letting \(I\to\infty\) proves \(t_k/k\to1\).

Let \(Z^i_k\) be the product of the factors assigned to component \(i\) through stage \(k\), with \(Z^i_0=1\), and set
\[
  \Sproc_k:=\sum_{i\ge1}v_iZ^i_k.
\]
The blocks used at stage \(k\) are independent of \(\F_{t_{k-1}}\), hence
\[
  \E_{P^\infty}[Z^i_k\mid\F_{t_{k-1}}]
  \le Z^i_{k-1}
\]
for every \(P\in\Pcal\).  Conditional monotone convergence yields
\[
  \E_{P^\infty}[\Sproc_k\mid\F_{t_{k-1}}]
  \le\Sproc_{k-1}.
\]
Thus \(\Sproc\) is a test supermartingale for the full-data filtration sampled at \((t_k)\), simultaneously under every null law.

Fix \(Q\in\Qcal\), and choose \(i\) with
\(\E_{Q^{N_i}}\log G_i>0\).  Component \(i\) is updated \(\lfloor k/d_i\rfloor\) times by stage \(k\) on independent \(Q^{N_i}\)-blocks, so the strong law gives \(Z^i_k\to\infty\), and hence \(\Sproc_k\to\infty\), \(Q^\infty\)-almost surely.

Finally assume (viii).  Define
\[
  \kappa_\alpha:=\inf\{k:\Sproc_k\ge1/\alpha\},
  \qquad
  \tau_\alpha:=t_{\kappa_\alpha},
\]
with the convention \(t_\infty=\infty\).
If \(t_k=0\), then \(\Sproc_k\) is \(\F_0\)-measurable and hence constant.  Since \(\Sproc\) is a supermartingale under every null law,
\[
  \Sproc_k=\E_{P^\infty}\Sproc_k
  \le \E_{P^\infty}\Sproc_0=1<1/\alpha
\]
for any \(P\in\Pcal\).  Thus \(\kappa_\alpha\) cannot occur at a zero grid time and \(\tau_\alpha\ge1\).  Since \(t_k\) is deterministic and tends to infinity, with
\(K(n):=\max\{k:t_k\le n\}\) one has
\[
  \{\tau_\alpha\le n\}
  =\{\kappa_\alpha\le K(n)\}\in\F_n.
\]
Thus \(\tau_\alpha\) is a stopping time in the original filtration.  Ville's inequality gives
\[
  \sup_{P\in\Pcal}P^\infty(\tau_\alpha<\infty)\le\alpha,
\]
and divergence of \(\Sproc_k\) gives power one under every alternative.
\end{proof}

\begin{corollary}[Logarithmic expected sample size]\label{cor:testing-ess}
Whenever the equivalent conditions of Theorem~\ref{thm:main} hold, the level-\(\alpha\) tests may be chosen so that, for every fixed \(Q\in\Qcal\), there are finite constants \(A_Q,B_Q\) satisfying
\[
  \E_{Q^\infty}\tau_\alpha
  \le A_Q+B_Q\log(1/\alpha),
  \qquad 0<\alpha<1.
\]
In particular, power one can always be achieved with finite pointwise expected sample size.
\end{corollary}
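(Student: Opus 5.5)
We use the e-process \(\Eproc\) built in the proof of \((vi)\Rightarrow(iii)\) and take \(\tau_\alpha:=\inf\{t:\Eproc_t\ge1/\alpha\}\), which has level \(\alpha\) by Theorem~\ref{thm:main}. Fix \(Q\in\Qcal\) and choose \(m\) and a rational \(\lambda\in\mathcal K_m\) with \(g:=g_{m,\lambda}=\E_{Q^{N_m}}\log G_{m,\lambda}\ge\lambda\mu_m/2>0\), exactly as in the divergence part of that proof. Since \(\Eproc_t\ge w_{m,\lambda}V^{m,\lambda}_t=\frac{w_{m,\lambda}}{N_m}\Pi^{m,\lambda,s(t)}_{\lfloor t/N_m\rfloor}\), we get \(\tau_\alpha\le\tau'\), where \(\tau'\) is the first time some grid process reaches the level \(c_\alpha:=N_m/(w_{m,\lambda}\alpha)\). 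It is enough to restrict to the single offset \(s=0\). Then \(\tau'\le N_m J\), with
\[
  J:=\inf\Bigl\{j:\textstyle\sum_{r=1}^j Y_r\ge\log c_\alpha\Bigr\},
  \qquad Y_r:=\log G_{m,\lambda}(\text{block }r).
\]
The \(Y_r\) are i.i.d. under \(Q^\infty\), with mean \(g>0\), and they are bounded. Indeed \(|\lambda h_m|\le\lambda H_m\le1/2\), so \(Y_r\in[\log\tfrac12,\log\tfrac32]\).

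The key step is a Wald/renewal bound for a random walk with bounded increments and positive drift. By Wald's identity applied to \(J\wedge n\) and then monotone convergence,
\[
  g\,\E J=\E\sum_{r\le J}Y_r\le \log c_\alpha+\log\tfrac32 .
\]
The overshoot is at most \(\log(3/2)\) because the increments are bounded. Finiteness of \(\E J\) needs a short truncation argument, or the standard fact that first-passage times of positive-drift walks have finite mean. This gives
\[
  \E_{Q^\infty}\tau_\alpha\le N_m\,\frac{\log(N_m/w_{m,\lambda})+\log(3/2)+\log(1/\alpha)}{g},
\]
so we can take \(A_Q=N_m(\log(N_m/w_{m,\lambda})+\log\tfrac32)/g\) and \(B_Q=N_m/g\). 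These constants depend on \(Q\) only through the chosen index \((m,\lambda)\), and they are finite and independent of \(\alpha\).

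The only delicate point is that a single procedure must work for all \(Q\) simultaneously. This is handled because \(\Eproc\) is fixed before \(Q\) is chosen, and the bound only uses the one mixture component that is good for \(Q\). Rigorously justifying \(\E J<\infty\) is the main technical step. We plan to handle it via \(\E(J\wedge n)\,g\le\E\sum_{r\le J\wedge n}Y_r\le\log c_\alpha+\log\tfrac32\), which follows from Wald's identity for bounded stopping times, and then let \(n\to\infty\).
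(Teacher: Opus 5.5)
Your proof is correct. It follows the paper up to the reduction \(\tau_\alpha\le N_mJ\) on the offset-zero grid with threshold \(b_\alpha=\log(N_m/(w_{m,\lambda}\alpha))\); it differs only in how you bound \(\E J\).

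\emph{The paper's route.} It applies Hoeffding's inequality to get \(Q^\infty(J>j)\le\exp(-jg^2/(2\Delta^2))\) for \(j\ge2b_\alpha/g\). Summing the tail then gives \(\E J\le 2b_\alpha/g+1+(1-e^{-c})^{-1}\), with a separate case for a degenerate increment range.

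\emph{Your route.} You apply Wald's identity at the bounded times \(J\wedge n\), writing \(S_j=\sum_{r\le j}Y_r\). You then use the pathwise bound \(S_{J\wedge n}<b_\alpha+\log\tfrac32\). This holds on \(\{J\le n\}\) because \(S_{J-1}<b_\alpha\) and \(G_{m,\lambda}\le\tfrac32\), and on \(\{J>n\}\) because \(S_n<b_\alpha\); here \(b_\alpha>0\) since \(w_{m,\lambda}\le1\) and \(\alpha<1\). Monotone convergence then yields \(\E J\le(b_\alpha+\log\tfrac32)/g\) directly, so no separate finiteness argument is needed.

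\emph{What each buys.} Your argument is shorter and needs only the upper bound on the increments, with no concentration inequality and no case split. It also gives the better slope \(B_Q=N_m/g\) instead of \(2N_m/g\), which is closer to the \(1/\KL\) scale in Remark~\ref{rem:information-lower-bounds}. The paper's Hoeffding argument controls the whole tail of the first-passage time, not just its mean. That tail is what the paper reuses for the uniform exponential-tail claim in Proposition~\ref{prop:uniform-testing}. For the expectation bounds in the change-detection proof, your Wald argument would work equally well conditionally on \(\F_{\nu-1}\).
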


\begin{proof}
Use the e-process from the proof of Theorem~\ref{thm:main}.  Fix \(Q\), and choose a component \((m,\lambda)\) with block length \(N=N_m\), mixture weight \(w=w_{m,\lambda}>0\), and positive log drift
\[
  g=\E_{Q^N}\log G_{m,\lambda}>0.
\]
On the offset-zero grid, put
\[
  Y_j:=\log\Pi^{m,\lambda,0}_j.
\]
At time \(jN\),
\[
  \Eproc_{jN}\ge \frac{w}{N}e^{Y_j}.
\]
Thus, with \(b_\alpha=\log(N/(\alpha w))>0\) and
\(\sigma_b=\inf\{j\ge1:Y_j\ge b\}\), one has
\(\tau_\alpha\le N\sigma_{b_\alpha}\).  The increments of \(Y_j\) are bounded, i.i.d., and have mean \(g\).  If they take values in an interval of length \(\Delta>0\), then for \(j\ge2b/g\), the event \(\{\sigma_b>j\}\) implies \(Y_j-jg\le-jg/2\), and Hoeffding's inequality \cite{hoeffding1963} gives
\[
  Q^\infty(\sigma_b>j)
  \le \exp\left(-\frac{jg^2}{2\Delta^2}\right).
\]
Writing \(j_0=\lceil2b/g\rceil\) and \(c=g^2/(2\Delta^2)\), the tail-sum formula yields
\[
  \E_{Q^\infty}\sigma_b
  =\sum_{j\ge0}Q^\infty(\sigma_b>j)
  \le j_0+\sum_{j\ge j_0}e^{-cj}
  \le \frac{2b}{g}+1+\frac1{1-e^{-c}}.
\]
If \(\Delta=0\), every increment equals \(g\) and \(\sigma_b=\lceil b/g\rceil\).  Substituting \(b=b_\alpha\) proves the claim.
\end{proof}

\begin{remark}[A testing information lower bound]\label{rem:information-lower-bounds}
The \(O(\log(1/\alpha))\) upper bound is the correct order in regular dominated models; classical and recent expected-sample-size analyses include \cite{robbins-siegmund-1974,agrawal-ramdas-2025}.  For a simple null \(P\) and simple alternative \(Q\), let \(I=\KL(Q\|P)\in(0,\infty)\).  Under the usual assumptions permitting Wald's stopped likelihood identity, every level-\(\alpha\), power-one test with finite \(Q\)-expected sample size satisfies
\[
  \E_{Q^\infty}\tau_\alpha
  \ge \frac{\log(1/\alpha)}{I}.
\]
Indeed, data processing from the stopped experiment to the event \(A=\{\tau_\alpha<\infty\}\) gives
\[
  I\E_{Q^\infty}\tau_\alpha
  \ge d\bigl(Q^\infty(A)\,\|\,P^\infty(A)\bigr)
  =\log\frac1{P^\infty(A)}
  \ge\log\frac1\alpha,
\]
because \(Q^\infty(A)=1\); if the expected sample size is infinite, the lower bound is automatic.  For a composite null, the natural benchmark is \(\inf_{P\in\Pcal}\KL(Q\|P)\) whenever the stopped change-of-measure identity applies uniformly.  Thus Corollary~\ref{cor:testing-ess} is optimal in order; identifying sharp constants leads toward the GROW duality discussed in the conclusion \cite{ram-larsson-ruf-ramdas-grow-2026}.
\end{remark}

\begin{remark}[E-process versus supermartingale.]\label{rem:offsets}
Condition~\textup{(viii)} shows that e-processes are not needed merely for existence in the i.i.d. model: one test supermartingale suffices on the deterministic reduced-time filtration \((\F_{t_k})\), and at macro-stage \(k\) it has used all observations through time \(t_k\).  The relation \(t_k/k\to1\) says that the observation budget is asymptotically one observation per macro-stage.  It does \emph{not} assert that the set of distinct inspection times \(\{t_k:k\ge0\}\) has natural density one: the nondecreasing sequence may repeat values and skip ordinary times.  The fine-time e-process remains useful because it is valid at every stopping time in the original filtration and the offset average gives full-time convergence \(\Eproc_n\to\infty\), rather than only the limsup property in condition~\textup{(iii$'$)}.

It is natural to ask whether one can always take \(t_k=k\), that is, whether every sequentially testable i.i.d. problem admits a nonnegative \((\F_n)\)-supermartingale diverging under every alternative.  The obstruction is that a block factor need not have conditional mean at most one when only part of its block has been revealed.  In Example~\ref{ex:two-observations} the answer is nevertheless positive: for any \(\lambda>0\),
\[
  \prod_{i=2}^n\bigl(1+\lambda\one\{X_i\ne X_{i-1}\}\bigr)
\]
is identically one under every point-mass null and has positive almost-sure exponential growth under every non-point-mass alternative.  To see the latter without invoking a dependent strong law, split the indicators \(\one\{X_i\ne X_{i-1}\}\) into the even and odd subsequences.  Within each subsequence the indicators are i.i.d., because they use disjoint pairs, and both have mean \(Q^2(X_1\ne X_2)>0\); applying the strong law to the two subsequences proves the claimed growth.
\end{remark}

\begin{remark}[Estimable functionals and block degree]\label{rem:polynomial}
Condition~\textup{(vii)} places the theorem in the classical theory of estimable functionals and U-statistics \cite{halmos1946,hoeffding1948}.  After symmetrization, every bounded block score \(h\) of length \(n\) defines the bounded estimable functional
\[
  F(Q)=\E_{Q^n}h,
\]
and the associated U-statistic is an unbiased estimator of \(F(Q)\).  In algebraic language, \(F\) is a bounded homogeneous polynomial law functional of degree at most \(n\).  The hierarchy is nested, since a degree-\(n\) certificate can be viewed as degree \(n+1\) by ignoring the last observation.  Thus one may define the minimal degree \(n^*(Q;\Pcal)\) of an individual certificate.

Example~\ref{ex:two-observations} has degree two but not degree one, whereas the mean example in Section~\ref{sec:change-detection} has degree one and the independence example in Section~\ref{sec:applications} has degree two.  Propositions~\ref{prop:countable-space-degree-two} and~\ref{prop:weak-compact-degree-two} below show that degree two suffices whenever testability holds on a countable discrete sample space and for every alternative outside a weakly compact null on a Polish space.  This sharpens the remaining structural question: does degree two always suffice whenever sequential testability holds, or can \(\sup_{Q\in\Qcal}n^*(Q;\Pcal)>2\) occur?  Any counterexample must have a composite null and cannot live on a countable discrete sample space; within the Polish setting the null must also fail weak compactness.
\end{remark}

\begin{corollary}[Complete characterization for a simple null]\label{cor:simple-null-one-block}
Suppose \(\Pcal=\{P_0\}\).  The following are equivalent:
\begin{enumerate}[label=(\roman*)]
\item the equivalent conditions of Theorem~\ref{thm:main} hold;
\item there exists a countable class \(\mathscr A\subseteq\B\) such that every \(Q\in\Qcal\) satisfies \(Q(A)\ne P_0(A)\) for some \(A\in\mathscr A\);
\item condition~\textup{(vi)} of Theorem~\ref{thm:main} can be witnessed by a countable family of one-observation scores of the form
\[
  \one_A-P_0(A)
  \quad\text{or}\quad
  P_0(A)-\one_A.
\]
\end{enumerate}
Thus a simple null never requires genuinely longer blocks.
\end{corollary}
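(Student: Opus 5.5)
The plan is to prove the cycle (iii)$\Rightarrow$(ii)$\Rightarrow$(i)$\Rightarrow$(iii) of Corollary~\ref{cor:simple-null-one-block}, where (i) means the conditions of Theorem~\ref{thm:main}. The implication (iii)$\Rightarrow$(i) is immediate. A countable family of one-observation scores $h=\pm(\one_A-P_0(A))$ is exactly a family of the form in condition~(vi) of Theorem~\ref{thm:main} with $n_m=1$. Its null expectation is $0\le 0$, and positivity under $Q$ is witnessed by hypothesis.

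The implication (ii)$\Rightarrow$(iii) is also immediate. For each $A\in\mathscr A$ include both scores $\one_A-P_0(A)$ and $P_0(A)-\one_A$. Each has $P_0$-expectation zero. Whenever $Q(A)\ne P_0(A)$, one of the two has strictly positive $Q$-expectation, namely $|Q(A)-P_0(A)|$.

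The substantive step is (i)$\Rightarrow$(ii). By condition~(iv) of Theorem~\ref{thm:main}, there are countably many pairs $(n_m,B_m)$ such that every $Q\in\Qcal$ has some $m$ with $Q^{n_m}(B_m)>P_0^{n_m}(B_m)$, using $\pi_{n_m}=P_0^{n_m}$ since the null is simple. In particular $Q^{n_m}\ne P_0^{n_m}$, and hence $Q\ne P_0$.

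What remains is to produce a countable class $\mathscr A\subseteq\B$ that separates $P_0$ from all such $Q$. The idea is to use a countable generating field of the sub-$\sigma$-algebra on which $B_m$ lives. Each $B_m\in\B^{\otimes n_m}$ lies in a product $\sigma$-algebra generated by countably many rectangles $A_1\times\cdots\times A_{n_m}$ with $A_i\in\B$. This follows from the standard fact that every set in a $\sigma$-algebra generated by a family lies in the $\sigma$-algebra generated by some countable subfamily. Collect all the sides $A_i$ appearing for all $m$ into a countable class $\mathscr A_0\subseteq\B$. Let $\mathscr A$ be the field generated by $\mathscr A_0$, which is still countable, and let $\mathcal C=\sigma(\mathscr A)$. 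By construction $B_m\in\mathcal C^{\otimes n_m}$ for every $m$.

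Now suppose $Q(A)=P_0(A)$ for every $A\in\mathscr A$. By the $\pi$–$\lambda$ theorem, $Q=P_0$ on $\mathcal C$. It follows that $Q^{n}=P_0^{n}$ on $\mathcal C^{\otimes n}$, because the products agree on the rectangles, which form a $\pi$-system generating $\mathcal C^{\otimes n}$. In particular $Q^{n_m}(B_m)=P_0^{n_m}(B_m)$ for every $m$, contradicting the choice of $m$. Hence every $Q\in\Qcal$ has some $A\in\mathscr A$ with $Q(A)\ne P_0(A)$, which is (ii).

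The main obstacle is the countable-generation step: showing that each product-measurable set depends on only countably many coordinate sets in $\B$. I would prove it directly. The collection of $B\in\B^{\otimes n}$ that lie in $\sigma(\text{countably many rectangles})$ contains all rectangles and is closed under complements and countable unions, so it is the whole of $\B^{\otimes n}$. Everything else follows from routine uniqueness-of-measure arguments.
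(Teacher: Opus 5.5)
Your proof is correct and follows the paper's argument: both reduce (i)$\Rightarrow$(ii) to the fact that every product-measurable set lies in the $\sigma$-algebra generated by countably many rectangles, take the countable field generated by the sides of those rectangles, and conclude by uniqueness of the marginal and product measures. The only difference is that you start from the events of condition~(iv) rather than the scores of condition~(vi), which skips the paper's detour through rational superlevel sets; also, your announced cycle order is mislabeled, but the three implications you actually prove, (i)$\Rightarrow$(ii)$\Rightarrow$(iii)$\Rightarrow$(i), do close the cycle.
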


\begin{proof}
The implication (iii)\(\Rightarrow\)(i) is Theorem~\ref{thm:main}, and (ii)\(\Rightarrow\)(iii) follows by including both signed scores for every \(A\in\mathscr A\).

For (i)\(\Rightarrow\)(ii), let \((n_m,h_m)\) witness condition~\textup{(vi)}.  Every bounded \(\B^{\otimes n_m}\)-measurable function is measurable with respect to the product sigma-algebra generated by some countable algebra \(\mathscr A_m\subseteq\B\).  Indeed, every set in a sigma-algebra generated by an arbitrary family belongs to the sigma-algebra generated by a countable subfamily: the sets having this property themselves form a sigma-algebra.  Apply this observation to each rational superlevel set of \(h_m\), collect the coordinate sets appearing in the resulting countably many rectangles, and close them under finite Boolean operations.  If \(Q\) agrees with \(P_0\) on \(\mathscr A_m\), then the measures agree on \(\sigma(\mathscr A_m)\), and their \(n_m\)-fold products agree on the sigma-algebra with respect to which \(h_m\) is measurable.  Hence \(\E_{Q^{n_m}}h_m=\E_{P_0^{n_m}}h_m\).  Every \(Q\) separated by \(h_m\) must therefore differ from \(P_0\) on some \(A\in\mathscr A_m\).  The countable union \(\mathscr A=\bigcup_m\mathscr A_m\) proves (ii).
\end{proof}

\begin{remark}[The degree-one boundary]\label{rem:degree-one}
For an individual alternative \(Q\), a one-observation certificate exists exactly when
\[
  Q\notin\overline{\operatorname{co}(\Pcal)}^{\,w^*},
\]
in the dual of the bounded measurable functions; equivalently, \(Q\notin\mathscr C_1\) in the notation of \eqref{eq:block-hulls} below.  This is the fixed-sample existence question studied in \cite{zhang-ramdas-wang-2024,larsson-ruf-ramdas-2026}.  Theorem~\ref{thm:main} says that sequential testability is obtained by allowing the union over all finite degrees and requiring a countable assembly of the resulting positivity sets.
\end{remark}

\subsection{Fixed-block geometry and the countable cover}
\label{subsec:geometric-corollary}

Fix \(n\ge1\), and let \(\mathcal L_n\) be the Banach space of bounded \(\B^{\otimes n}\)-measurable functions on \(\X^n\), equipped with the supremum norm, and let \(\mathrm{ba}_n=\mathcal L_n^*\), the bounded finitely additive signed measures, or charges, on \((\X^n,\B^{\otimes n})\).  Equip \(\mathrm{ba}_n\) with the weak-$*$ topology \(\sigma(\mathrm{ba}_n,\mathcal L_n)\).  For a nonempty subfamily \(\mathcal A\subseteq\Qcal\), set
\begin{equation}\label{eq:block-hulls}
  \mathscr C_n:=\overline{\operatorname{co}\{P^n:P\in\Pcal\}}^{\,w^*},
  \qquad
  \mathscr D_n(\mathcal A):=\overline{\operatorname{co}\{Q^n:Q\in\mathcal A\}}^{\,w^*}.
\end{equation}
Under the standard identification \(\mathcal L_n^*=\mathrm{ba}_n\) \cite{rao-rao1983}, the positive mass-one charges form a weak-$*$ closed subset of the dual unit ball.  Banach--Alaoglu therefore makes this set weak-$*$ compact, so every element of \(\mathscr C_n\) and \(\mathscr D_n(\mathcal A)\) is a finitely additive probability measure.  For such charges, write
\[
  d_{\TV}(\mu,\nu)
  :=\sup_{0\le f\le1}\bigl(\E_\mu f-\E_\nu f\bigr),
\]
and use the infimum over pairs for the distance between sets.

\begin{proposition}[Fixed-block minimax identity]\label{prop:block-tv-identity}
For every nonempty \(\mathcal A\subseteq\Qcal\),
\begin{equation}\label{eq:block-tv-identity}
  \sup_{0\le\varphi\le1}
  \left\{
    \inf_{Q\in\mathcal A}\E_{Q^n}\varphi
    -\sup_{P\in\Pcal}\E_{P^n}\varphi
  \right\}
  =d_{\TV}\bigl(\mathscr C_n,\mathscr D_n(\mathcal A)\bigr).
\end{equation}
The infimum defining the distance on the right is attained; attainment of the supremum over \(\varphi\) is not claimed.
\end{proposition}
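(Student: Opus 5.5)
The plan is to prove the identity \eqref{eq:block-tv-identity} by a minimax argument on the weak-$*$ compact convex sets \(\mathscr C_n\) and \(\mathscr D_n(\mathcal A)\). Since \(\mathscr C_n\) and \(\mathscr D_n(\mathcal A)\) contain the laws \(P^n\) and \(Q^n\), we may rewrite the left side. Write \(\langle\mu,\varphi\rangle:=\E_\mu\varphi\) for \(\mu\in\mathrm{ba}_n\) and \(\varphi\in\mathcal L_n\). For a fixed \(\varphi\), the map \(\mu\mapsto\langle\mu,\varphi\rangle\) is weak-$*$ continuous and affine. Hence its infimum over \(\{Q^n:Q\in\mathcal A\}\) equals its infimum over the weak-$*$ closed convex hull \(\mathscr D_n(\mathcal A)\), and likewise its supremum over \(\{P^n:P\in\Pcal\}\) equals its supremum over \(\mathscr C_n\). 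The left side therefore becomes
\[
  \sup_{0\le\varphi\le1}\ \inf_{(\mu,\nu)\in\mathscr C_n\times\mathscr D_n(\mathcal A)}\bigl(\langle\nu,\varphi\rangle-\langle\mu,\varphi\rangle\bigr),
\]
and the right side is the same expression with \(\inf\) and \(\sup\) interchanged. By the definition of \(d_{\TV}\) as a supremum over \(0\le f\le1\), that interchanged quantity is \(\inf_{\mu,\nu}d_{\TV}(\nu,\mu)\). I will use the orientation of \(d_{\TV}\) that matches \(\E_\nu f-\E_\mu f\). For positive mass-one charges the two orientations coincide, by replacing \(f\) with \(1-f\), so symmetry is not an issue.

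The inequality \(\sup\inf\le\inf\sup\) is trivial. For the reverse inequality I will apply Sion's minimax theorem. The set \(K:=\mathscr C_n\times\mathscr D_n(\mathcal A)\) is convex and compact in the product weak-$*$ topology, because each factor is a weak-$*$ closed subset of the compact set of finitely additive probabilities, as recorded before the proposition via Banach--Alaoglu. The set \(\Phi:=\{\varphi\in\mathcal L_n:0\le\varphi\le1\}\) is convex. The payoff \(L((\mu,\nu),\varphi)=\langle\nu-\mu,\varphi\rangle\) is bilinear. For fixed \(\varphi\) it is weak-$*$ continuous on \(K\), hence lower semicontinuous there. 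For fixed \((\mu,\nu)\) it is norm-continuous in \(\varphi\). Sion's theorem needs compactness on only one side, so it gives
\[
  \inf_{K}\sup_{\Phi}L=\sup_{\Phi}\inf_{K}L,
\]
which is exactly \eqref{eq:block-tv-identity}.

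For attainment of the infimum, note that for each \(\varphi\) the map \((\mu,\nu)\mapsto\langle\nu-\mu,\varphi\rangle\) is weak-$*$ continuous. The map \((\mu,\nu)\mapsto d_{\TV}(\nu,\mu)\) is a supremum of such maps, so it is weak-$*$ lower semicontinuous on the compact set \(K\) and attains its minimum. The supremum over \(\varphi\) need not be attained, because \(\Phi\) is not compact in any topology making \(L\) upper semicontinuous in \(\varphi\); this matches the statement, which claims nothing there.

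I expect the main obstacle to be verifying the topological hypotheses carefully rather than any computation. Three points need care. First, the weak-$*$ closed hull must stay inside the positive mass-one charges; this needs positivity and mass-one to be weak-$*$ closed conditions, which holds because they are tested against \(\one\) and against nonnegative \(f\). Second, the hull closure does not change the inner extrema, which follows from continuity and affineness. Third, I must use the form of Sion's theorem that requires compactness only of the infimum side. If anything is delicate, it is the identification \(\mathcal L_n^*=\mathrm{ba}_n\), which ensures that the weak-$*$ topology is the one making each \(\mu\mapsto\langle\mu,\varphi\rangle\) continuous; that identification is taken from \cite{rao-rao1983} as stated.
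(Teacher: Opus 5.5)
Your proposal is correct and follows essentially the same route as the paper. Both rewrite the left side over the weak-$*$ closed hulls using affine weak-$*$ continuity, apply a minimax theorem with compactness on $\mathscr C_n\times\mathscr D_n(\mathcal A)$, identify the inner supremum with $d_{\TV}$ via $f\mapsto1-f$, and obtain attainment from lower semicontinuity on a compact set. The only differences are minor. You invoke Sion's theorem, using norm continuity in $\varphi$ for the upper-semicontinuity hypothesis, whereas the paper uses Ky Fan's theorem, which needs no topology on the test set. Your stated reason why the supremum over $\varphi$ need not be attained is only heuristic: for finite $\X$ the test set is norm compact and the supremum is attained. Nothing in the proof depends on that aside.
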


\begin{proof}
The sets \(\mathscr C_n\) and \(\mathscr D_n(\mathcal A)\) are weak-$*$ compact and convex: they are weak-$*$ closed subsets of the compact positive mass-one set described above.  Apply Fan's minimax theorem \cite{fan1953} to
\[
  F((\mu,\nu),\varphi)=\E_\nu\varphi-\E_\mu\varphi
\]
on the compact convex set \(\mathscr C_n\times \mathscr D_n(\mathcal A)\) and the convex set of tests \(0\le\varphi\le1\).  For fixed \(\varphi\), the payoff is weak-$*$ continuous and affine in \((\mu,\nu)\); for fixed \((\mu,\nu)\), it is affine, hence both quasiconcave and quasiconvex, in \(\varphi\).  Thus Fan's hypotheses hold and
\[
  \inf_{\mu\in \mathscr C_n,\,\nu\in \mathscr D_n(\mathcal A)}
  \sup_{0\le\varphi\le1}F((\mu,\nu),\varphi)
  =
  \sup_{0\le\varphi\le1}
  \inf_{\mu\in \mathscr C_n,\,\nu\in \mathscr D_n(\mathcal A)}F((\mu,\nu),\varphi).
\]
Because \(\mu\) and \(\nu\) have equal mass one, the inner one-sided supremum is the usual total-variation distance: replacing \(\varphi\) by \(1-\varphi\) reverses its sign.  On the right, affine weak-$*$ continuity shows that convexification and weak-$*$ closure do not change the relevant extrema, yielding the left side of \eqref{eq:block-tv-identity}.  Finally, \((\mu,\nu)\mapsto d_{\TV}(\mu,\nu)\) is the supremum of weak-$*$ continuous functions and therefore lower semicontinuous; compactness gives attainment.  This is the finite-block specialization of \cite{larsson-ruf-ramdas-2026}.
\end{proof}

\begin{corollary}[Geometric characterization of power-one testing]
\label{cor:geometric-power-one}
The equivalent conditions of Theorem~\ref{thm:main} hold if and only if there exist nonempty subfamilies \(\mathcal A_m\subseteq\Qcal\) and integers \(n_m\ge1\) such that
\begin{equation}\label{eq:countable-tv-cover}
  \Qcal=\bigcup_{m\ge1}\mathcal A_m,
  \qquad
  d_{\TV}\bigl(\mathscr C_{n_m},\mathscr D_{n_m}(\mathcal A_m)\bigr)>0
  \quad\text{for every }m.
\end{equation}
\end{corollary}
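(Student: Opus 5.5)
The plan is to prove both directions through condition~\textup{(v)} of Theorem~\ref{thm:main} (randomized tests with a strict power--size gap), using Proposition~\ref{prop:block-tv-identity} to translate between uniform gaps and positive total-variation distance.

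\emph{Sufficiency.} Assume the cover \eqref{eq:countable-tv-cover}. For each \(m\), let \(\delta_m:=d_{\TV}(\mathscr C_{n_m},\mathscr D_{n_m}(\mathcal A_m))>0\). By Proposition~\ref{prop:block-tv-identity}, the left side of \eqref{eq:block-tv-identity} with \(n=n_m\) and \(\mathcal A=\mathcal A_m\) equals \(\delta_m\). Since attainment is not claimed, we do not use an optimizer. Instead we pick a test \(\varphi_m:\X^{n_m}\to[0,1]\) achieving at least \(\delta_m/2\), so that
\[
  \inf_{Q\in\mathcal A_m}\E_{Q^{n_m}}\varphi_m-\size(\varphi_m)\ge\delta_m/2>0 .
\]
Every \(Q\in\Qcal\) lies in some \(\mathcal A_m\), and then \(\pow_Q(\varphi_m)>\size(\varphi_m)\). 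The countable family \((n_m,\varphi_m)\) therefore witnesses condition~\textup{(v)}.

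\emph{Necessity.} Assume condition~\textup{(v)} holds with a family \((n_m,\varphi_m)\). The gaps are only pointwise, so we split according to rational lower bounds on them. For each \(m\) and each rational \(r>0\), define
\[
  \mathcal A_{m,r}:=\{Q\in\Qcal:\pow_Q(\varphi_m)-\size(\varphi_m)\ge r\}.
\]
Each \(Q\) has a strictly positive gap for some \(m\), and some rational \(r\) lies below that gap, so the sets \(\mathcal A_{m,r}\) cover \(\Qcal\). We discard the empty ones; they cannot all be empty because \(\Qcal\) is nonempty. On each remaining \(\mathcal A_{m,r}\), the test \(\varphi_m\) satisfies \(\inf_{Q\in\mathcal A_{m,r}}\E_{Q^{n_m}}\varphi_m-\size(\varphi_m)\ge r\). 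Hence the left side of \eqref{eq:block-tv-identity} is at least \(r\), and Proposition~\ref{prop:block-tv-identity} gives \(d_{\TV}(\mathscr C_{n_m},\mathscr D_{n_m}(\mathcal A_{m,r}))\ge r>0\). Re-enumerating the countably many pairs \((m,r)\) as a single sequence yields \eqref{eq:countable-tv-cover}.

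There is no real obstacle once Proposition~\ref{prop:block-tv-identity} is available; all the convex-analytic content sits there. The two points requiring care are bookkeeping. First, in the sufficiency direction we must use near-optimal tests, since the supremum over \(\varphi\) need not be attained. Second, in the necessity direction the rational splitting is exactly what converts pointwise gaps into uniform ones, which is needed because the distance between closed hulls only captures uniform separation over the whole subfamily.
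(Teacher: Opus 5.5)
Your proposal is correct and follows the paper's proof essentially verbatim. You pass through condition~(v) in both directions, split the alternatives by rational lower bounds on the gap (the paper uses margins \(1/r\) with integer \(r\)), apply Proposition~\ref{prop:block-tv-identity} to obtain distance at least the margin, and in the converse choose near-optimal tests with uniform gap at least half the distance.
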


\begin{proof}
Assume condition~\textup{(v)} of Theorem~\ref{thm:main}, witnessed by \((n_m,\varphi_m)\).  For integers \(r\ge1\), define
\[
  \mathcal A_{m,r}:=\left\{Q\in\Qcal:
  \E_{Q^{n_m}}\varphi_m-
  \sup_{P\in\Pcal}\E_{P^{n_m}}\varphi_m\ge\frac1r\right\}.
\]
Every positive gap is at least \(1/r\) for some \(r\), so the nonempty \(\mathcal A_{m,r}\)'s form a countable cover.  For every nonempty class \(\mathcal A_{m,r}\), Proposition~\ref{prop:block-tv-identity} gives distance at least \(1/r\).

Conversely, suppose \eqref{eq:countable-tv-cover} holds and set
\(d_m=d_{\TV}(\mathscr C_{n_m},\mathscr D_{n_m}(\mathcal A_m))>0\).  By \eqref{eq:block-tv-identity}, choose a randomized test whose uniform gap exceeds \(d_m/2\).  These countably many tests verify condition~\textup{(v)} of Theorem~\ref{thm:main}.
\end{proof}

\subsection{Uniform versus pointwise separation}

\begin{proposition}[Uniform sequential testability]\label{prop:uniform-testing}
The following statements are equivalent.
\begin{enumerate}[label=(\roman*)]
\item For some \(n\ge1\), there is a randomized test \(\varphi:\X^n\to[0,1]\) such that
\[
  \inf_{Q\in\Qcal}\E_{Q^n}\varphi
  >\sup_{P\in\Pcal}\E_{P^n}\varphi.
\]
\item For some \(n\ge1\), \(d_{\TV}(\mathscr C_n,\mathscr D_n(\Qcal))>0\).
\item There are finite constants \(A,B\), independent of \(Q\) and \(\alpha\), such that for every \(\alpha\in(0,1)\) there is a level-\(\alpha\), power-one test \(\tau_\alpha\) satisfying
\[
  \sup_{Q\in\Qcal}\E_{Q^\infty}\tau_\alpha
  \le A+B\log(1/\alpha).
\]
\item For some \(\alpha\in(0,1)\), there is a level-\(\alpha\) test \(\tau\) such that
\[
  \sup_{Q\in\Qcal}Q^\infty(\tau>k)\longrightarrow0.
\]
\end{enumerate}
Whenever these conditions hold, the tests in (iii) may be chosen with a tail that, after a constant multiple of \(\log(1/\alpha)\), decays exponentially in the number of observed blocks, uniformly over \(Q\in\Qcal\).
\end{proposition}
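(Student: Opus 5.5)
The plan is to prove (i)$\Leftrightarrow$(ii), then (i)$\Rightarrow$(iii)$\Rightarrow$(iv)$\Rightarrow$(i).

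The equivalence (i)$\Leftrightarrow$(ii) is Proposition~\ref{prop:block-tv-identity} with $\mathcal A=\Qcal$. The left side of \eqref{eq:block-tv-identity} is positive exactly when some $\varphi$ has a strict uniform gap, and the right side is $d_{\TV}(\mathscr C_n,\mathscr D_n(\Qcal))$.

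For (i)$\Rightarrow$(iii), let $\delta:=\inf_Q\E_{Q^n}\varphi-\size(\varphi)>0$ and $h:=\varphi-\size(\varphi)$. Then $h$ takes values in $[-1,1]$, has null mean at most $0$, and satisfies $\E_{Q^n}h\ge\delta$ for every $Q$. Fix a single rational $\lambda$, independent of $Q$, with $\lambda\le 1/2$ and $\lambda\le\delta/2$. The inequality $\log(1+u)\ge u-u^2$ then gives
\[
  \E_{Q^n}\log(1+\lambda h)\ge\lambda\delta/2=:g.
\]
The increments $\log(1+\lambda h)$ lie in $[\log(1-\lambda),\log(1+\lambda)]$, an interval of length $\Delta$ that depends only on $\lambda$. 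Use the single product martingale $\Pi_j=\prod_{r\le j}(1+\lambda h(\text{block }r))$ on the grid $\{jn\}$. This is a test supermartingale under every null law, so Ville's inequality gives level $\alpha$ for $\tau_\alpha:=n\inf\{j:\Pi_j\ge1/\alpha\}$. Under $Q$, write $Y_j=\log\Pi_j$. By Azuma--Hoeffding applied to the martingale differences, with the drift at least $g$ per block, one gets $Q^\infty(Y_j<b)\le\exp(-jg^2/(2\Delta^2))$ for $j\ge 2b/g$. The tail-sum computation of Corollary~\ref{cor:testing-ess}, now with $g,\Delta$ uniform in $Q$, yields
\[
  \E_{Q^\infty}\tau_\alpha\le n\bigl(2\log(1/\alpha)/g+1+(1-e^{-c})^{-1}\bigr)
\]
uniformly, which is (iii). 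The same Hoeffding bound is precisely the advertised tail statement: after $2\log(1/\alpha)/g$ blocks the tail decays like $e^{-cj}$ uniformly over $\Qcal$. The one subtlety to check is that the increments are independent but not identically distributed across $Q$. This causes no trouble, since for fixed $Q$ they are i.i.d. and only the lower bound $g$ on their mean is used.

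For (iii)$\Rightarrow$(iv), take a fixed $\alpha$ and apply Markov's inequality: $\sup_Q Q^\infty(\tau_\alpha>k)\le (A+B\log(1/\alpha))/k\to0$.

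For (iv)$\Rightarrow$(i), mimic the step (i)$\Rightarrow$(iv) of Theorem~\ref{thm:main}. Choose $k$ with $\sup_Q Q^\infty(\tau>k)<1-\alpha$, and let $B_k$ represent $\{\tau\le k\}$. Then $\pi_k(B_k)\le\alpha$, while $\inf_Q Q^k(B_k)>\alpha$, so $\varphi=\one_{B_k}$ with $n=k$ witnesses (i).

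The main obstacle is the uniformity in (i)$\Rightarrow$(iii). It requires choosing one $\lambda$ and one block length that serve all $Q$, which is why the single uniform gap $\delta$ (rather than the countable mixture) is essential. It also requires confirming that Hoeffding's constants depend only on $\lambda$. Both points are handled by the explicit bounds above.
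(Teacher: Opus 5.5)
Your proposal is correct and follows the paper's proof essentially step for step: (i)$\Leftrightarrow$(ii) via the fixed-block minimax identity; for (i)$\Rightarrow$(iii), a single $Q$-independent betting factor $1+\lambda h$ with Ville's inequality, Hoeffding, and the tail-sum bound; Markov for (iii)$\Rightarrow$(iv); and the rejection events $\{\tau\le k\}$ for (iv)$\Rightarrow$(i). The only differences are cosmetic: you allow any $\lambda\le\delta/2$ where the paper fixes $\lambda=\delta/2$ with drift at least $\delta^2/4$, and you bound the increment range a priori by $\log\frac{1+\lambda}{1-\lambda}$, which makes the paper's check that the range is nondegenerate unnecessary.
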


\begin{proof}
The equivalence (i)\(\Longleftrightarrow\)(ii) is Proposition~\ref{prop:block-tv-identity}.  Assume (i), put
\[
  c=\sup_{P\in\Pcal}\E_{P^n}\varphi,
  \qquad
  \delta=\inf_{Q\in\Qcal}\E_{Q^n}\varphi-c>0,
\]
and let \(h=\varphi-c\), so \(|h|\le1\).  Since \(0<\delta\le1\), the choice \(\lambda=\delta/2\) satisfies \(\lambda\le1/2\).  The factor \(G=1+\lambda h\) is therefore positive, has null expectation at most one, and obeys
\[
  g_0:=\inf_{Q\in\Qcal}\E_{Q^n}\log G
  \ge\lambda\delta-\lambda^2=\frac{\delta^2}{4}.
\]
Let \(L_j\) be the product of \(G\) over the first \(j\) consecutive, independent \(n\)-blocks, set \(b_\alpha=\log(1/\alpha)\), and define \(\sigma_\alpha=\inf\{j:\log L_j\ge b_\alpha\}\) and \(\tau_\alpha=n\sigma_\alpha\).  Under every null, \((L_j)\) is a nonnegative supermartingale, so Ville's inequality makes \(\tau_\alpha\) level \(\alpha\).  Let \(\Delta_G<\infty\) be the range length of \(\log G\), which is independent of \(Q\).  In fact \(\Delta_G>0\): otherwise \(G\) would be constant, and its null expectation bound would force \(\log G\le0\), contradicting \(g_0>0\).  For every \(Q\in\Qcal\) and \(j\ge2b_\alpha/g_0\), Hoeffding's inequality \cite{hoeffding1963} gives
\[
  Q^\infty(\sigma_\alpha>j)
  \le \exp\left(-\frac{jg_0^2}{2\Delta_G^2}\right).
\]
The tail-sum calculation in Corollary~\ref{cor:testing-ess}, with \(g\) replaced by \(g_0\), yields constants \(A,B<\infty\) independent of \(Q\) and \(\alpha\), and it also gives the asserted uniform exponential tail.

Condition (iii) implies (iv), for example by taking \(\alpha=1/2\) and applying Markov's inequality.  Finally, assume (iv) and set \(B_k=\{\tau\le k\}\).  Then \(\sup_P P^k(B_k)\le\alpha\), while
\[
  \inf_{Q\in\Qcal}Q^k(B_k)
  =1-\sup_{Q\in\Qcal}Q^\infty(\tau>k)\longrightarrow1.
\]
For some \(k\), the latter quantity is strictly larger than \(\alpha\), proving (i).
\end{proof}

\begin{example}[Two observations can create testability]\label{ex:two-observations}
Let \(\X=[0,1]\), let \(\Pcal=\{\delta_x:x\in[0,1]\}\), and let \(\Qcal\) be all probability measures on \([0,1]\) that are not point masses.  No one-observation randomized test works, because
\[
  \sup_{P\in\Pcal}\E_P\varphi
  =\sup_{x\in[0,1]}\varphi(x)\ge\E_Q\varphi.
\]
For two observations, take \(B=\{(x_1,x_2):x_1\ne x_2\}\).  This is Borel because the diagonal is closed in \([0,1]^2\).  Every point-mass null has \(P^2(B)=0\), while \(Q^2(B)>0\) for every non-point-mass \(Q\).  Thus one degree-two certificate verifies condition~\textup{(iv)} of Theorem~\ref{thm:main}.

The separation is not uniform: for
\(Q_\varepsilon=(1-\varepsilon)\delta_0+\varepsilon\delta_1\),
\[
  Q_\varepsilon^2(B)=2\varepsilon(1-\varepsilon)\longrightarrow0.
\]
If \(\mathcal A_r=\{Q:Q^2(B)\ge1/r\}\), then
\(d_{\TV}(\mathscr C_2,\mathscr D_2(\mathcal A_r))\ge1/r\).  For atomless alternatives, by contrast, \(Q^2(B)=1\) uniformly.  This example is also a special case of Proposition~\ref{prop:weak-compact-degree-two} below.
\end{example}

The weak-$*$ closure in Corollary~\ref{cor:geometric-power-one} is not specific to change detection.  It is the compact representation of a uniform fixed-block gap; rational-margin refinement turns the pointwise inequalities of Theorem~\ref{thm:main} into countably many such gaps.

\begin{remark}[Why countability appears]\label{rem:countability}
The most direct reading of condition~\textup{(v)} is that a \emph{countable} family of fixed-sample tests, each merely beating its own null size on some alternatives, already upgrades to one power-one sequential procedure.  A stopping rule supplies only the countable family \(\{\tau\le n\}\), while countably many certificates can be assigned positive mixture weights.  Example~\ref{ex:countable-cocountable} shows that ``countable'' cannot be replaced by an arbitrary family.
\end{remark}

\begin{remark}[Uniform gaps amplify to near-perfect blocks]\label{rem:amplification}
Suppose a bounded block score \(h\) satisfies
\[
  \sup_{P\in\Pcal}\E_{P^n}h\le0,
  \qquad
  \inf_{Q\in\mathcal A}\E_{Q^n}h\ge\delta>0.
\]
Scale so that the range of \(h\) has length at most two.  Repeating it on \(k\) independent blocks and rejecting when the sum exceeds \(k\delta/2\), Hoeffding's inequality \cite{hoeffding1963} makes both worst-case null error and worst-case type-II error over \(\mathcal A\) at most \(\exp(-k\delta^2/8)\).  The difficulty in Theorem~\ref{thm:main} is therefore countable assembly, not amplification.
\end{remark}

\subsection{Countable assembly}

\begin{proposition}[A sufficient Lindel\"of assembly criterion]\label{prop:lindelof}
For every admissible finite-block score \((n,h)\), meaning that \(h\) is bounded and
\(\sup_{P\in\Pcal}\E_{P^n}h\le0\), set
\[
  U_{n,h}:=\{Q\in\Qcal:\E_{Q^n}h>0\}.
\]
Let \(\mathfrak T_{\Pcal}\) be the topology for which these sets form a subbase.  If every \(Q\in\Qcal\) is individually finite-block testable and \((\Qcal,\mathfrak T_{\Pcal})\) is Lindel\"of, then the equivalent conditions of Theorem~\ref{thm:main} hold.
\end{proposition}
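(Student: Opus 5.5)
The plan is to verify condition~\textup{(vi)} of Theorem~\ref{thm:main} directly. First I will show that the sets \(U_{n,h}\) already form an open cover of \(\Qcal\) in \(\mathfrak T_{\Pcal}\). I read the hypothesis that every \(Q\in\Qcal\) is individually finite-block testable as condition~\textup{(vi)} for the singleton alternative \(\{Q\}\), or equivalently, via the theorem applied to \(\{Q\}\), as any of its equivalent forms. So for each \(Q\) there are countably many admissible scores, and at least one of them, say \((n_Q,h_Q)\), satisfies \(\E_{Q^{n_Q}}h_Q>0\), which means \(Q\in U_{n_Q,h_Q}\). If the individual notion is instead phrased through a sequential test against \(\{Q\}\), the implication \((i)\Rightarrow(vi)\) of Theorem~\ref{thm:main} for the singleton alternative produces such a score. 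In either reading, \(\{U_{n,h}\}\) covers \(\Qcal\).

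Next I apply the Lindel\"of property. Each \(U_{n,h}\) is a subbasic open set, and hence open in \(\mathfrak T_{\Pcal}\). The family \(\{U_{n_Q,h_Q}:Q\in\Qcal\}\) is therefore an open cover of \((\Qcal,\mathfrak T_{\Pcal})\), and it has a countable subcover \(\{U_{n_m,h_m}\}_{m\ge1}\). Every \(Q\in\Qcal\) lies in some \(U_{n_m,h_m}\), so \(\E_{Q^{n_m}}h_m>0\). In addition, each \(h_m\) is bounded with \(\sup_{P\in\Pcal}\E_{P^{n_m}}h_m\le0\), because it is admissible. This is exactly condition~\textup{(vi)}, and Theorem~\ref{thm:main} then yields all the equivalent conditions.

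There is no substantial obstacle. The only point requiring care is to use a cover by subbasic sets. Lindel\"ofness applies to arbitrary open covers, so no reduction to a subbase-specific version of Alexander's lemma is needed. It also matters that each cover element is a single score set rather than a finite intersection of them, since a single score set is what gives a direct certificate. A further, minor point is interpreting ``individually finite-block testable.'' I will state it explicitly as the existence of one admissible score with positive \(Q\)-expectation, noting that this is equivalent to singleton testability by Theorem~\ref{thm:main}.
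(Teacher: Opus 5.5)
Your proposal is correct and follows the paper's proof: the subbasic sets \(U_{n,h}\) cover \(\Qcal\) by individual finite-block testability, and Lindel\"ofness extracts a countable subcover, which is exactly condition~\textup{(vi)} of Theorem~\ref{thm:main}. Your reading of ``individually finite-block testable'' as the existence of one admissible score with positive \(Q\)-expectation is the right one; by Theorem~\ref{thm:main} for the singleton alternative, it is equivalent to singleton testability.
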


\begin{proof}
The family of all \(U_{n,h}\)'s is an open cover, and Lindel\"ofness gives a countable subcover, which is condition~\textup{(vi)}.  This is only a sufficient topological criterion: the exact condition is that this particular subbasic cover admit a countable subcover, whereas Lindel\"ofness requires the same for every open cover.
\end{proof}

\begin{proposition}[Automatic assembly from continuous certificates]\label{prop:continuous-assembly}
Let \(\X\) be Polish with its Borel sigma-algebra.  Suppose that every \(Q\in\Qcal\) has some \(n\ge1\) and bounded continuous \(h:\X^n\to\R\) satisfying
\[
  \sup_{P\in\Pcal}\E_{P^n}h\le0<\E_{Q^n}h.
\]
Then the equivalent conditions of Theorem~\ref{thm:main} hold.
\end{proposition}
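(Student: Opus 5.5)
The plan is to verify condition~\textup{(vi)} of Theorem~\ref{thm:main} directly, by replacing the possibly uncountable family of continuous certificates with a countable one. Since \(\X\) is Polish, each \(\X^n\) is Polish, so \(C_b(\X^n)\) with the topology of uniform convergence on compacta is not separable in general. The right tool is therefore not a countable dense subset of \(C_b\). Instead I will use a countable family of bounded Lipschitz functions, with the sign of positivity detected through weak convergence. Concretely, fix a compatible metric and a countable dense set \(D\subseteq\X\). For each \(n\), let \(\mathcal H_n\) be the countable collection of functions on \(\X^n\) built by taking finite rational max/min combinations of truncated distance functions \(x\mapsto (q-r\,d(x,y))_+\wedge 1\), with \(y\in D^n\) and \(q,r\) rational.

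The key step is as follows. Given \(Q\) and its continuous certificate \(h\) with \(\mu:=\E_{Q^n}h>0\), I look for a countable-family score \(\tilde h\) satisfying two conditions: \(\sup_{P}\E_{P^n}\tilde h\le 0\) and \(\E_{Q^n}\tilde h>0\). Approximating \(h\) uniformly is impossible without compactness. So I will instead exploit the fact that only \(Q\) needs positivity, while the null constraint must hold for all \(P\). The first attempt is to find \(g\in\mathcal H_n\) (suitably shifted by a rational constant) with \(g\le h\) pointwise and \(\E_{Q^n}g>\mu/2\). Bounded continuous \(h\) is the pointwise supremum of the countably many Lipschitz minorants drawn from \(\mathcal H_n\), via the standard inf-convolution \(h_k(x)=\inf_y\{h(y)+k\,d(x,y)\}\), which increases to \(h\) pointwise, together with rational truncation and density of \(D\). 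Monotone convergence under \(Q^n\) then yields a minorant \(g\) with \(\E_{Q^n}g>\mu/2\). Because \(g\le h\), every null law satisfies \(\E_{P^n}g\le\E_{P^n}h\le0\). Taking \(\tilde h=g\) therefore gives an admissible score.

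The main obstacle is making the minorants genuinely come from a countable set that does not depend on \(h\). The inf-convolutions \(h_k\) depend on \(h\), so I will replace each \(h_k\) by the following construction: a maximum of finitely many cones \(c-k\,d(\cdot,y)\), with \(y\in D^n\), \(c\) rational, and \(c\le\inf_{z}\{h(z)+k\,d(z,y)\}\), truncated below by a rational lower bound of \(h\). Each such cone lies below \(h\). Along a countable dense \(D^n\), the supremum of these cones recovers \(h_k\) up to arbitrary rational slack, because \(h_k\) is \(k\)-Lipschitz. The resulting countable class \(\mathcal H_n\) consists of finite maxima of rational cones with rational truncation. It does not depend on \(h\), and every certificate \(h\) has a member \(g\in\mathcal H_n\) with \(g\le h\) and \(\E_{Q^n}g>0\).

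Finally, I let the countable family be all pairs \((n,g)\) with \(g\in\bigcup_n\mathcal H_n\) and \(\sup_P\E_{P^n}g\le0\). Discarding the inadmissible members is harmless, since each \(Q\) is served by some admissible \(g\). This verifies condition~\textup{(vi)}, and Theorem~\ref{thm:main} then gives all the equivalent conditions.
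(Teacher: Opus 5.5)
Your proof is correct, but it takes a different route from the paper's.

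\textbf{The paper's route.} The paper argues on the space of laws. Because $\X$ is Polish, the weak topology on $\M_1(\X)$ is second countable, so every subspace is Lindel\"of. For bounded continuous $h$ the map $R\mapsto\E_{R^n}h$ is weakly continuous, since $R_j\Rightarrow R$ implies $R_j^n\Rightarrow R^n$. The positivity sets $\{R:\E_{R^n}h>0\}$ therefore form an open cover of $\Qcal$, and a countable subcover is literally condition~(vi).

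\textbf{Your route.} You argue on the sample space instead. You fix one countable dictionary of bounded Lipschitz functions: finite maxima of rational cones $c-k\,d(\cdot,y)$ with $y\in D^n$, truncated below at a rational level. Using the inf-convolutions $h_k\uparrow h$, density of $D^n$, and two monotone-convergence steps, you show that every continuous certificate $h$ dominates a dictionary element $g$ that still has $\E_{Q^n}g>0$. The pointwise bound $g\le h$ then transfers the null constraint directly, with no uniformity needed over a possibly noncompact $\Pcal$.

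\textbf{What each buys.} The paper's argument is much shorter. It also returns a countable subfamily of the given certificates, which is convenient when those certificates carry structure, as with the explicit symmetric degree-two kernels in Proposition~\ref{prop:countable-space-degree-two}. Yours avoids the weak topology on measures and the continuity of $R\mapsto R^n$, and it keeps the block length $n$. It also gives a more explicit conclusion: the admissible members of a single countable Lipschitz dictionary, determined by the metric, $D$ and $\Pcal$ alone, work simultaneously for every alternative that has a continuous certificate.

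\textbf{Small cleanups.}
\begin{enumerate}[label=(\alph*)]
\item Define $\mathcal H_n$ once, as the class you actually use. Your first paragraph instead describes functions $(q-r\,d(x,y))_+\wedge1$.
\item Work with a compatible product metric such as $\max_i d(x_i,y_i)$, so that $D^n$ is dense and $h$ is continuous for it.
\item Note that continuous functions on $\X^n$ are $\B^{\otimes n}$-measurable because $\X$ is separable metrizable.
\item Your opening motivation is slightly off. $C_b(\X^n)$ is nonseparable in the supremum norm, but it is separable for the compact-open topology when $\X$ is separable metrizable. This does not affect the proof, since approximation on compacta would not preserve $\sup_{P\in\Pcal}\E_{P^n}g\le0$ anyway; that is exactly what your minorant device secures.
\end{enumerate}
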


\begin{proof}
The weak topology on \(\M_1(\X)\) is second countable, hence every subspace is Lindel\"of \cite{billingsley1999}.  The product map \(R\mapsto R^n\) is weakly continuous: if \(R_j\Rightarrow R\), then \(R_j^n\Rightarrow R^n\).  Consequently, for bounded continuous \(h\), the map \(R\mapsto\E_{R^n}h\) is weakly continuous.  The pointwise positivity neighborhoods therefore form an open cover of \(\Qcal\) with a countable subcover.
\end{proof}

\subsection{Degree-two certificates}

\begin{lemma}[A general two-observation kernel certificate]\label{lem:kernel-certificate}
Let \(k:\X\times\X\to\R\) be bounded, measurable, symmetric, and positive definite.  For \(Q\in\M_1(\X)\), let \(Z,Z'\) be independent with common law \(Q\), and define the centered kernel
\[
  k_Q(x,y)
  :=k(x,y)-\E_{Z\sim Q}k(x,Z)-\E_{Z\sim Q}k(Z,y)
  +\E_{Z,Z'\sim Q}k(Z,Z'),
\]
and write
\[
  \operatorname{MMD}_k^2(P,Q):=\E_{P^2}k_Q,
\]
with \(\operatorname{MMD}_k\) denoting the nonnegative square root.  If
\[
  r_Q:=\inf_{P\in\Pcal}\operatorname{MMD}_k(P,Q)>0,
\]
then
\[
  h_Q(x,y):=r_Q^2-k_Q(x,y)
\]
is a bounded measurable symmetric two-observation certificate for \(Q\):
\[
  \sup_{P\in\Pcal}\E_{P^2}h_Q\le0,
  \qquad
  \E_{Q^2}h_Q=r_Q^2>0.
\]
If \(\X\) is metrizable and \(k\) is bounded and continuous, then \(h_Q\) is continuous.
\end{lemma}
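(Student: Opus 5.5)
The argument is a direct computation of the two expectations of \(h_Q\), together with a few short checks that \(h_Q\) is well defined.

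\emph{Well-definedness.} Since \(k\) is bounded and measurable, the partial integrals \(x\mapsto\E_{Z\sim Q}k(x,Z)\) and \(y\mapsto\E_{Z\sim Q}k(Z,y)\) are bounded and measurable by Fubini. Hence \(k_Q\) is bounded, with \(|k_Q|\le4\lVert k\rVert_\infty\). It is measurable, and it is symmetric because \(k\) is symmetric.

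\emph{The alternative side.} Take \(Z,Z'\) i.i.d.\ from \(Q\) and integrate each of the four terms of \(k_Q\). Each term gives \(\E k(Z,Z')\), with signs \(+,-,-,+\), so the sum is zero. Thus
\[
  \E_{Q^2}k_Q=0,
  \qquad
  \E_{Q^2}h_Q=r_Q^2>0.
\]

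\emph{The null side.} For \(P\in\Pcal\), the definition gives \(\E_{P^2}k_Q=\operatorname{MMD}_k^2(P,Q)\). Expanding, this equals
\[
  \E_{P^2}k-2\,\E_{P\otimes Q}k+\E_{Q^2}k,
\]
which is the quadratic form of \(k\) applied to the signed measure \(P-Q\). Then \(\E_{P^2}h_Q=r_Q^2-\operatorname{MMD}_k^2(P,Q)\le0\), because \(\operatorname{MMD}_k(P,Q)\ge r_Q\ge0\) by definition of the infimum. Taking the supremum over \(P\) gives the first inequality.

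\emph{The one real point: nonnegativity.} The only substantive step is showing that \(\iint k\,d(P-Q)\,d(P-Q)\ge0\), so that the square root \(\operatorname{MMD}_k\) is meaningful. I would get this from positive definiteness. For discrete measures it is exactly the defining inequality \(\sum c_ic_jk(x_i,x_j)\ge0\). For general finite signed measures, a bounded measurable positive definite kernel has a measurable feature map into its RKHS, with \(\lVert\phi(x)\rVert^2=k(x,x)\) bounded. The quadratic form is then \(\lVert\int\phi\,d(P-Q)\rVert^2\ge0\), using Bochner integrability of the bounded feature map.

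A simpler fallback avoids the RKHS. Draw an i.i.d.\ sample from the mixture \((P+Q)/2\), weight each point by the density of \(P-Q\) relative to that mixture, and apply the strong law to the associated V-statistic. This writes the form as an almost-sure limit of finite quadratic forms, each nonnegative.

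\emph{Continuity.} If \(\X\) is metrizable and \(k\) is bounded and continuous, the partial integrals are continuous by dominated convergence along sequences, which suffices in a metrizable space. Therefore \(k_Q\), and hence \(h_Q\), is continuous.
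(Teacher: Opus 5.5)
Your proposal is correct and follows the paper's route. You expand \(\E_{P^2}h_Q=r_Q^2-\operatorname{MMD}_k^2(P,Q)\) and \(\E_{Q^2}h_Q=r_Q^2\), take nonnegativity of the quadratic form from positive definiteness via kernel mean embeddings, and get continuity by dominated convergence along sequences.

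One small caution: a bounded measurable kernel can have a nonseparable RKHS, for example \(k(x,y)=\one\{x=y\}\) on \([0,1]\). There the feature map \(x\mapsto k(x,\cdot)\) is not Bochner integrable against Lebesgue measure. You can instead define the embedding of \(P-Q\) by Riesz representation of the bounded functional \(f\mapsto\int f\,d(P-Q)\), which gives \(\lVert\mu\rVert^2=\iint k\,d(P-Q)\,d(P-Q)\) directly, or rely on your V-statistic fallback, which is rigorous as stated.
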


\begin{proof}
Positive definiteness gives \(\operatorname{MMD}_k^2(P,Q)\ge0\); equivalently, the displayed quantity is the squared norm of the difference of the kernel mean embeddings, as in \cite{sriperumbudur-et-al-2010}.  Direct expansion yields
\[
  \E_{P^2}h_Q=r_Q^2-\operatorname{MMD}_k^2(P,Q),
  \qquad
  \E_{Q^2}h_Q=r_Q^2.
\]
The stated inequalities follow from the definition of \(r_Q\).  Now suppose \(\X\) is metrizable and \(k\) is bounded and continuous.  If \(x_j\to x\), then \(k(x_j,z)\to k(x,z)\) for every \(z\), and dominated convergence gives
\[
  \int k(x_j,z)\,dQ(z)\longrightarrow\int k(x,z)\,dQ(z).
\]
The analogous statement holds in the other coordinate.  Thus \(k_Q\), and hence \(h_Q\), is sequentially continuous; on a metrizable space this is equivalent to continuity.
\end{proof}

\begin{proposition}[Complete characterization on countable discrete sample spaces]\label{prop:countable-space-degree-two}
Suppose \(\X\) is countable and \(\B=2^\X\).  Identify a law \(R\) with its probability vector \(\theta_R=(R\{x\})_{x\in\X}\in\ell^1(\X)\), and put
\[
  \Theta_{\Pcal}:=\{\theta_P:P\in\Pcal\}.
\]
Then the equivalent conditions of Theorem~\ref{thm:main} hold if and only if
\[
  \theta_Q\notin\overline{\Theta_{\Pcal}}^{\,\ell^1}
  \qquad\text{for every }Q\in\Qcal.
\]
Whenever this holds, condition~\textup{(vi)} may be witnessed entirely by bounded symmetric two-observation scores.  Thus, on a countable discrete sample space, individual finite-block testability automatically assembles into one simultaneous power-one procedure.
\end{proposition}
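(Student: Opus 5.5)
The proof has two directions. Necessity uses the fact that every finite-block certificate is $\ell^1$-continuous as a law functional. Sufficiency builds an explicit two-observation kernel certificate for each alternative and then assembles these countably, using separability of $\ell^1(\X)$.

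\emph{Necessity.} Suppose $(n_m,h_m)$ witnesses condition~(vi) of Theorem~\ref{thm:main}. Since $\X$ is countable, each map $R\mapsto\E_{R^n}h$ is continuous in $\theta_R$ for the $\ell^1$ norm. This follows from the telescoping bound
\[
  \lVert R^n-S^n\rVert_{\TV}\le n\lVert R-S\rVert_{\TV}
  \qquad\text{and}\qquad
  \lVert R-S\rVert_{\TV}=\tfrac12\lVert\theta_R-\theta_S\rVert_{\ell^1},
\]
which gives $|\E_{R^n}h-\E_{S^n}h|\le n\lVert h\rVert_\infty\lVert\theta_R-\theta_S\rVert_{\ell^1}$. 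Now suppose $\theta_Q$ lay in the $\ell^1$-closure of $\Theta_\Pcal$. Then $\E_{Q^{n_m}}h_m$ would be a limit of values $\E_{P^{n_m}}h_m\le0$, so it would be $\le0$ for every $m$. This contradicts the existence of some $m$ with $\E_{Q^{n_m}}h_m>0$.

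\emph{Sufficiency with degree two.} Fix $Q$ with $\theta_Q\notin\overline{\Theta_\Pcal}$, so that $d_Q:=\inf_P\lVert\theta_P-\theta_Q\rVert_{\ell^1}>0$. I want a two-observation certificate that is $\ell^1$-continuous in $Q$ and that also certifies all laws in a small $\ell^1$-ball around $Q$. I would use the bounded positive-definite kernel
\[
  k(x,y)=\sum_{x'\in\X}c_{x'}\one\{x=x'\}\one\{y=x'\},
  \qquad c_x>0,\ \sum_x c_x<\infty,
\]
which in fact has $k(x,x)=c_x$. For this kernel $\operatorname{MMD}_k^2(P,Q)=\sum_x c_x(P\{x\}-Q\{x\})^2$. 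The difficulty is that this weighted $\ell^2$ quantity can be small while the $\ell^1$ distance stays at least $d_Q$, for instance when mass is spread over many atoms. To handle this I would localize to finitely many coordinates: choose a finite set $F\subseteq\X$ with $Q(F^c)<d_Q/8$. Then for every $P\in\Pcal$, either $\sum_{x\in F}|P\{x\}-Q\{x\}|\ge d_Q/4$, or $P(F^c)$ is large, namely at least $d_Q/2$ after accounting. The second case would be handled through the atom $F^c$. So I would work with the finite partition $F\cup\{F^c\}$ and the kernel $k(x,y)=\sum_{A}\one_A(x)\one_A(y)$ over cells of that partition. On finitely many cells the $\ell^2$ and $\ell^1$ norms are comparable, which gives $r_Q\ge c(d_Q,|F|)>0$. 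Lemma~\ref{lem:kernel-certificate} then yields the symmetric certificate $h_Q$, bounded and depending only on finitely many cells.

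\emph{Countable assembly.} By the Lipschitz bound above, $Q'\mapsto\E_{Q'^2}h_Q$ is $\ell^1$-continuous, so $h_Q$ certifies an entire open $\ell^1$-neighbourhood of $\theta_Q$ within $\Qcal$. Because $\ell^1(\X)$ is separable, the subspace $\{\theta_Q:Q\in\Qcal\}$ is Lindelöf, and countably many such neighbourhoods cover $\Qcal$; this is the mechanism of Proposition~\ref{prop:lindelof} with the $\ell^1$ topology in place of $\mathfrak T_\Pcal$. The resulting countable family of degree-two symmetric scores witnesses (vi). I expect the main obstacle to be the localization step, that is, proving that a uniform $\ell^1$ gap $d_Q$ forces a uniform positive gap on the finite partition. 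The first thing I would try is the direct inequality
\[
  \sum_{x\in F}|P\{x\}-Q\{x\}|+|P(F^c)-Q(F^c)|
  \ \ge\ \lVert\theta_P-\theta_Q\rVert_{\ell^1}-2Q(F^c),
\]
which holds because the off-$F$ discrepancy is at most $P(F^c)+Q(F^c)$.
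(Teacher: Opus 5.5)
Your proof is correct, and the inequality you end with does close the step you flagged. With \(Q(F^c)<d_Q/8\) it gives \(\sum_{A}\lvert P(A)-Q(A)\rvert\ge 3d_Q/4\) over the \(\lvert F\rvert+1\) cells, for every \(P\in\Pcal\). Cauchy--Schwarz then yields \(r_Q\ge 3d_Q/(4\sqrt{\lvert F\rvert+1})>0\) for the cell kernel, so Lemma~\ref{lem:kernel-certificate} applies. Your necessity half is essentially the paper's argument. Your assembly, via the \(\ell^1\)-Lipschitz bound and separability of \(\ell^1(\X)\), is the same Lindel\"of mechanism that the paper invokes through Proposition~\ref{prop:continuous-assembly}; on a countable discrete space the weak and \(\ell^1\) topologies coincide.

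The genuine difference is the kernel. The paper uses the single \(Q\)-independent counting kernel \(\one\{x=y\}\). It proves that the \(\ell^1\)- and \(\ell^2\)-closures of \(\Theta_{\Pcal}\) agree on the simplex, via \(\lVert d\rVert_1=2\sum_x d(x)^-\le2\sqrt K\lVert d\rVert_2+2\sum_{r>K}q(x_r)\), which uses \(d^-\le q\). This is the same finite-atoms-plus-tail localization as yours. The mass-balance identity \(\lVert d\rVert_1=2\sum_x d(x)^-\) makes your extra cell \(F^c\) and the case split on \(P(F^c)\) unnecessary. It also produces one closed-form certificate family, \(h_Q(x,y)=r_Q^2-\lVert\theta_Q\rVert_2^2+\theta_Q(x)+\theta_Q(y)-\one\{x=y\}\), for all \(Q\). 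Your kernel instead changes with \(Q\), though each of your certificates depends only on membership in finitely many cells. Both routes give comparable explicit lower bounds on \(r_Q\) in terms of \(d_Q\) and the size of the localizing set.

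Incidentally, your worry about the summable-weight kernel is not an obstruction to positivity. If \(\sum_x c_x(P_j\{x\}-Q\{x\})^2\to0\), then \(P_j\{x\}\to Q\{x\}\) for every \(x\), and Scheff\'e's lemma gives \(\ell^1\) convergence. So that kernel also has \(r_Q>0\) whenever \(\theta_Q\) lies outside the \(\ell^1\)-closure. What it lacks is only a lower bound on \(r_Q\) expressed through \(d_Q\) alone.
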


\begin{proof}
If \(\theta_Q\in\overline{\Theta_{\Pcal}}^{\ell^1}\), choose \(P_j\in\Pcal\) with \(d_{\TV}(P_j,Q)\to0\).  The product bound
\[
  d_{\TV}(P_j^n,Q^n)\le n\,d_{\TV}(P_j,Q)
\]
follows, for example, by coupling each coordinate optimally and applying a union bound.  Hence, for every bounded \(h:\X^n\to\R\),
\[
  \left|\E_{P_j^n}h-\E_{Q^n}h\right|
  \le 2\lVert h\rVert_\infty d_{\TV}(P_j^n,Q^n)
  \le 2n\lVert h\rVert_\infty d_{\TV}(P_j,Q)\longrightarrow0.
\]
Hence no finite-block score can be nonpositive on all of \(\Pcal\) and strictly positive at \(Q\).  This proves necessity.

For sufficiency, first note that the \(\ell^1\)- and \(\ell^2\)-closures of \(\Theta_{\Pcal}\) agree inside the probability simplex.  The implication from \(\ell^1\) to \(\ell^2\) is immediate.  The finite case is immediate as well, so for the converse suppose \(\X=\{x_1,x_2,\ldots\}\) is countably infinite and \(p_j\to q\) in \(\ell^2\), where \(q\) is a fixed probability vector.  For every \(K\), writing \(d_j=p_j-q\) and using \(\sum_xd_j(x)=0\),
\[
  \lVert d_j\rVert_1
  =2\sum_x d_j(x)^-
  \le 2\sqrt K\,\lVert d_j\rVert_2
      +2\sum_{r>K}q(x_r).
\]
Letting first \(j\to\infty\) and then \(K\to\infty\) gives \(\lVert p_j-q\rVert_1\to0\).

Now fix \(Q\) outside the displayed closure and set
\[
  r_Q:=\inf_{P\in\Pcal}\lVert\theta_P-\theta_Q\rVert_2>0.
\]
For the counting kernel \(k(x,y)=\one\{x=y\}\),
\[
  \operatorname{MMD}_k^2(P,Q)
  =\lVert\theta_P-\theta_Q\rVert_2^2.
\]
Lemma~\ref{lem:kernel-certificate} therefore gives the explicit bounded score
\[
  h_Q(x,y)
  =r_Q^2-\lVert\theta_Q\rVert_2^2
   +\theta_Q(x)+\theta_Q(y)-\one\{x=y\}.
\]
The countable discrete space is Polish and this score is continuous on \(\X^2\).  Proposition~\ref{prop:continuous-assembly} supplies a countable subfamily of such certificates, proving simultaneous testability.  The finite-space result is the special case in which \(\ell^1\) and \(\ell^2\) are finite-dimensional norms.
\end{proof}

\begin{proposition}[Degree two for weakly compact nulls]\label{prop:weak-compact-degree-two}
Let \(\X\) be Polish with its Borel sigma-algebra and let \(\Pcal\subseteq\M_1(\X)\) be weakly compact.  Then every \(Q\notin\Pcal\) admits a bounded continuous symmetric two-observation score \(h_Q\) such that
\[
  \sup_{P\in\Pcal}\E_{P^2}h_Q\le0<\E_{Q^2}h_Q.
\]
Consequently, the equivalent conditions of Theorem~\ref{thm:main} hold for every \(\Qcal\subseteq\M_1(\X)\setminus\Pcal\), and all certificates may be taken of degree two.
\end{proposition}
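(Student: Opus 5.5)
The plan is to combine Lemma~\ref{lem:kernel-certificate} with Proposition~\ref{prop:continuous-assembly}. We use a bounded continuous characteristic kernel whose MMD metrizes the weak topology on \(\M_1(\X)\). Since \(\X\) is Polish, there is a homeomorphic embedding \(\iota:\X\to[0,1]^{\mathbb N}\) onto a subset of the Hilbert cube. Take a bounded continuous positive definite kernel on the Hilbert cube that is characteristic and metrizes weak convergence there, for example a Gaussian kernel \(\exp(-\|u-v\|^2)\) with \(\|u\|^2=\sum_i 2^{-i}u_i^2\), and set \(k(x,y)\) to be this kernel evaluated at \((\iota(x),\iota(y))\).

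The key analytic step is that \(\operatorname{MMD}_k(P,Q)=0\) forces \(P=Q\), and that \(P\mapsto\operatorname{MMD}_k(P,Q)\) is weakly continuous. Continuity is routine: \(\operatorname{MMD}_k^2(P,Q)=\E_{P^2}k-2\E_{P\otimes Q}k+\E_{Q^2}k\), and the map \(R\mapsto\E_{R^2}k\) is weakly continuous because \(R\mapsto R^2\) is weakly continuous, as already used in Proposition~\ref{prop:continuous-assembly}. For injectivity, push \(P\) and \(Q\) forward to the compact metric space \([0,1]^{\mathbb N}\). The Gaussian kernel with respect to the Hilbert norm is universal on compact subsets of a separable Hilbert space, since \(u\mapsto(\sqrt{2^{-i}}u_i)\) embeds the cube compactly into \(\ell^2\). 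Universality means its RKHS is dense in \(C\) of the image. Hence equal mean embeddings force equal pushforward laws, and injectivity of \(\iota\) together with Borel measurability of its image (Lusin--Souslin) gives \(P=Q\). This is the step I expect to be the main obstacle. If the universality citation for infinite-dimensional Gaussian kernels is shaky, I would fall back on \(k(x,y)=\sum_j 2^{-j}f_j(x)f_j(y)\), where \((f_j)\) is a countable convergence-determining family of bounded continuous functions with values in \([0,1]\), for instance built from a totally bounded metric as in \cite{billingsley1999}. This kernel is positive definite, bounded and continuous, and \(\operatorname{MMD}_k^2(P,Q)=\sum_j2^{-j}(\E_Pf_j-\E_Qf_j)^2\), which vanishes only if \(P=Q\).

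With \(k\) fixed, fix \(Q\notin\Pcal\). The map \(P\mapsto\operatorname{MMD}_k(P,Q)\) is weakly continuous and strictly positive on the weakly compact set \(\Pcal\), so its infimum \(r_Q\) is attained and positive. Lemma~\ref{lem:kernel-certificate} then supplies the bounded, symmetric, continuous two-observation score \(h_Q=r_Q^2-k_Q\) with \(\sup_{P\in\Pcal}\E_{P^2}h_Q\le0<r_Q^2=\E_{Q^2}h_Q\). Finally, for an arbitrary \(\Qcal\subseteq\M_1(\X)\setminus\Pcal\), every \(Q\) has such a continuous certificate. Proposition~\ref{prop:continuous-assembly} extracts a countable subfamily, which is condition~\textup{(vi)} of Theorem~\ref{thm:main} witnessed entirely by degree-two scores.
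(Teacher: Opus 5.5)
Your proof is correct and is essentially the paper's argument: the paper uses a kernel of your fallback form, $k(x,y)=\sum_j 2^{-j}f_j(x)f_j(y)$ with $(f_j)\subset C_b(\X)$ a countable family generating the weak topology, reads off from the series that it is characteristic and that $P\mapsto\operatorname{MMD}_k(P,Q)$ is weakly continuous, uses compactness only to get $r_Q>0$, and then applies Lemma~\ref{lem:kernel-certificate}. The differences are minor. Your primary Gaussian kernel on the Hilbert cube also works, via universality of Gaussian kernels on compact subsets of a separable Hilbert space, but it imports an external theorem that the series kernel avoids. Where you invoke Proposition~\ref{prop:continuous-assembly}, the paper instead covers $\M_1(\X)\setminus\Pcal$ by explicit balls $U_Q=\{R:\operatorname{MMD}_k(R,Q)<r_Q/2\}$, which also yields the uniform bound $\E_{R^2}h_Q>\tfrac34 r_Q^2$ on $U_Q$ used for its rate bookkeeping.
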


\begin{proof}
The weak topology on \(\M_1(\X)\) is second countable and is the initial topology generated by the maps \(P\mapsto\E_Pf\), \(f\in C_b(\X)\) \cite{billingsley1999}.  To obtain a countable generating family, take a countable base of the weak topology.  Each base element is a union of finite-coordinate basic neighborhoods; second countability, hence Lindel\"ofness, supplies a countable such subcover.  Collecting the finitely many functions appearing in these neighborhoods over the countable base yields a countable family \((f_j)_{j\ge1}\subset C_b(\X)\) that generates the weak topology and therefore determines probability measures.  Normalize it so that \(|f_j|\le1\), and define
\[
  k(x,y):=\sum_{j\ge1}2^{-j}f_j(x)f_j(y).
\]
Uniform convergence makes \(k\) bounded and continuous, and its feature-map representation shows that it is positive definite.  Moreover,
\[
  \operatorname{MMD}_k^2(P,Q)
  =\sum_{j\ge1}2^{-j}
   \bigl(\E_Pf_j-\E_Qf_j\bigr)^2,
\]
so \(k\) is characteristic in the terminology of \cite{sriperumbudur-et-al-2010}.  The displayed series also shows that \(P\mapsto\operatorname{MMD}_k(P,Q)\) is weakly continuous.

Fix \(Q\notin\Pcal\).  Compactness is used only here: it gives
\[
  r_Q:=\inf_{P\in\Pcal}\operatorname{MMD}_k(P,Q)>0.
\]
Lemma~\ref{lem:kernel-certificate} supplies the desired continuous degree-two score.  The neighborhoods
\[
  U_Q:=\{R:\operatorname{MMD}_k(R,Q)<r_Q/2\}
\]
cover \(\M_1(\X)\setminus\Pcal\), and second countability supplies a countable subcover.  If \(R\in U_Q\), then
\[
  \frac12r_Q<r_R<\frac32r_Q,\qquad r_R:=\inf_{P\in\Pcal}\operatorname{MMD}_k(P,R),
  \qquad
  \E_{R^2}h_Q>\frac34r_Q^2,
\]
by the triangle inequality.  The construction also yields the following rate bookkeeping.  There is a universal \(H<\infty\) with \(\lVert h_Q\rVert_\infty\le H\) for all \(Q\).  For \(R\in U_Q\), put \(\mu=\E_{R^2}h_Q\) and choose a rational
\[
  \lambda\in\left[\frac{\mu}{4H^2},\frac{\mu}{2H^2}\right].
\]
Since \(0<\mu\le\lVert h_Q\rVert_\infty\le H\), this choice satisfies \(\lambda H\le\mu/(2H)\le1/2\), and
\[
  \E_{R^2}\log(1+\lambda h_Q)
  \ge\frac{\lambda\mu}{2}
  \ge\frac{\mu^2}{8H^2}\asymp r_R^4.
\]
Hence the coefficients of the logarithmic terms in the testing and change-detection bounds are \(O(r_R^{-4})\).  The additive terms additionally record the fixed mixture weight of the selected neighborhood.
\end{proof}

\begin{remark}[Relation to weak-compactness sufficiency]\label{rem:ram-ramdas}
Proposition~\ref{prop:weak-compact-degree-two} gives an independent short proof of the qualitative existence theorem of Ram and Ramdas \cite{ram-ramdas-2026} and strengthens that conclusion by showing that bounded continuous degree-two certificates suffice.  Their work additionally develops an asymptotically relatively growth-rate-optimal e-process, a quantitative objective not addressed by the present existence theorem.  The kernel lemma makes the division of labor transparent: compactness is used only to turn point separation in a characteristic metric into the positive distance \(r_Q>0\); the two-observation certificate itself is completely general.
\end{remark}

\begin{remark}[What failure of degree two would require]\label{rem:degree-two-obstruction}
By Proposition~\ref{prop:block-tv-identity} applied to the singleton \(\{Q\}\), and because \(\mathscr C_2\) is weak-$*$ closed, the absence of a degree-two certificate is equivalent to \(Q^2\in\mathscr C_2\).  It then follows that for every countable family \((f_j)_{j\ge1}\) of bounded measurable functions, there is a sequence \((P_i)\subseteq\Pcal\) such that
\[
  \E_{P_i}f_j\longrightarrow\E_Qf_j
  \qquad\text{for every fixed }j.
\]
Indeed, let
\[
  \widetilde f_j
  :=\frac{f_j-\E_Qf_j}{1+2\lVert f_j\rVert_\infty},
  \qquad
  k(x,y):=\sum_{j\ge1}2^{-j}\widetilde f_j(x)\widetilde f_j(y).
\]
Then \(k\) is a bounded measurable positive-definite kernel,
\[
  \E_{P^2}k
  =\sum_{j\ge1}2^{-j}\bigl(\E_P\widetilde f_j\bigr)^2\ge0,
  \qquad
  \E_{Q^2}k=0.
\]
Since the affine weak-$*$ continuous functional \(\nu\mapsto\E_\nu k\) has value zero at \(Q^2\in\mathscr C_2\), its infimum over the generators \(\{P^2:P\in\Pcal\}\) is zero.  Choosing \(P_i\) with \(\E_{P_i^2}k\le1/i\) proves the claim.

Thus any counterexample to collapse of the degree hierarchy at two must allow the null to approximate each relevant alternative simultaneously on every prescribed countable family of bounded observables.  This provides a quick obstruction for many natural moment and atom constraints.  It also explains the contrast with Example~\ref{ex:two-observations}: on an uncountable space, a discontinuous score such as the diagonal indicator may separate at degree two even when one-observation closed-convex separation fails.
\end{remark}

\begin{example}[Individual testability need not assemble countably]\label{ex:countable-cocountable}
Let \(\X\) be uncountable and let \(\B\) be the countable--cocountable sigma-algebra.  Define
\[
  P_0(A):=\begin{cases}
    0,&A\text{ countable},\\
    1,&A^c\text{ countable},
  \end{cases}
  \qquad
  \Pcal=\{P_0\},
  \qquad
  \Qcal=\{\delta_x:x\in\X\}.
\]
The set function \(P_0\) is a countably additive probability measure.  Each \(\delta_x\) is individually perfectly testable by \(\one_{\{x\}}\).  Nevertheless, no single power-one sequential test works for all of \(\Qcal\).

Indeed, for every \(n\) and every bounded \(\B^{\otimes n}\)-measurable \(\varphi\), there are a countable set \(S\subseteq\X\) and a constant \(c\) such that
\begin{equation}\label{eq:eventually-constant}
  \varphi\equiv c\quad\text{on }(\X\setminus S)^n.
\end{equation}
To verify this, let \(\mathcal H_n\) contain those sets \(A\subseteq\X^n\) for which some countable \(S\) makes \((\X\setminus S)^n\) either contained in or disjoint from \(A\).  This is a sigma-algebra containing every measurable rectangle, hence \(\B^{\otimes n}\subseteq\mathcal H_n\).  Applying this fact to the countably many rational superlevel sets of \(\varphi\) proves the display.

Now \(P_0^n((\X\setminus S)^n)=1\), so \(\E_{P_0^n}\varphi=c\); for every \(x\notin S\), also \(\E_{\delta_x^n}\varphi=\varphi(x,\ldots,x)=c\).  Thus one finite-block test strictly separates at most countably many of the alternatives, and a countable family separates at most countably many.  Condition~\textup{(v)} of Theorem~\ref{thm:main} therefore fails.  This shows that individual perfect testability does not imply simultaneous sequential testability on an arbitrary measurable space.  In the topology of Proposition~\ref{prop:lindelof}, the subspace \(\{\delta_x:x\in\X\}\) is uncountable and discrete, hence non-Lindel\"of.  It also shows that the Polish/Borel regularity in Proposition~\ref{prop:weak-compact-degree-two} cannot simply be removed, even when the null is a singleton.  Together with Proposition~\ref{prop:countable-space-degree-two}, it leaves a sharper assembly question: on a standard Borel space with a genuinely composite null, can individual measurable finite-block testability fail to admit a countable common family?
\end{example}

\section{Change detection with optional-horizon ARL control}
\label{sec:change-detection}

We now consider an unknown changepoint separating an i.i.d. pre-change law from an i.i.d. post-change law.  The objective is to control false alarms at an average-run-length (ARL) scale while detecting every fixed post-change distribution substantially faster than that scale.  Bare ARL control is only a mean constraint: it permits front-loaded false alarms and can be made vacuous by an infinite right tail or inflated by deterministic waiting that carries no statistical information.  We therefore formulate validity through an optional-horizon inequality.  By fixed-scale e-detector universality, this is an operational criterion on alarm times rather than a restriction to a particular construction; the proof below additionally produces one common e-detector for the full threshold family.

\subsection{Model, optional-horizon ARL control, e-detectors, and delay}

Throughout this section, \((\F_t)\) denotes the actual filtration used by the procedure.  Without auxiliary randomization it is the coordinate filtration from Section~\ref{sec:setup}; when randomization is used, it is enlarged to include the auxiliary variables revealed through time \(t\).  We allow only auxiliary randomization that is independent of the observations under every law, and all probabilities and expectations below include it.  For \(P\in\Pcal\), \(Q\in\Qcal\), and an integer \(\nu\ge1\), let \(\Prob_{\nu}^{P,Q}\) denote the law whose observation marginal is
\[
  P^{\nu-1}\otimes Q^\infty,
\]
so that \(X_1,\ldots,X_{\nu-1}\) are i.i.d. \(P\) and \(X_\nu,X_{\nu+1},\ldots\) are i.i.d. \(Q\); when auxiliary randomization is present, its independent law is included as an additional product factor.  We write \(\E_{\nu}^{P,Q}\) for the corresponding expectation.  The no-change law is \(P^\infty\).

\begin{definition}[E-detector and threshold rule]\label{def:e-detector}
An adapted process \(\Detector=(\Detector_t)_{t\ge0}\) with values in \([0,\infty]\) and \(\Detector_0=0\) is an \emph{e-detector for \(\Pcal\)} if
\begin{equation}\label{eq:e-detector-inequality}
  \E_{P^\infty}\Detector_\tau\le \E_{P^\infty}\tau
  \qquad
  \text{for every }P\in\Pcal
  \text{ and every stopping time }\tau.
\end{equation}
Here \(\Detector_\infty:=\liminf_t\Detector_t\).  If \(\E_{P^\infty}\tau=\infty\), the inequality is interpreted in the extended sense and is automatic.  For \(\gamma\ge1\), define
\begin{equation}\label{eq:e-detector-threshold}
  T_\gamma:=\inf\{t\ge1:\Detector_t\ge\gamma\}.
\end{equation}
The robust average run length is
\[
  \operatorname{ARL}_{\Pcal}(T)
  :=\inf_{P\in\Pcal}\E_{P^\infty}T.
\]
\end{definition}

\begin{lemma}[Bounded stopping times suffice for e-detectors]\label{lem:bounded-stopping-edetector}
Let \(\Detector\) be nonnegative and adapted with
\(\Detector_\infty=\liminf_t\Detector_t\).  If
\[
  \E_{P^\infty}\Detector_\sigma\le\E_{P^\infty}\sigma
\]
for every \(P\in\Pcal\) and every bounded stopping time \(\sigma\), then \(\Detector\) is an e-detector.
\end{lemma}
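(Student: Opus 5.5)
The plan mirrors the proof of Lemma~\ref{lem:bounded-stopping-eprocess}: approximate an arbitrary stopping time \(\tau\) by the bounded stopping times \(\tau\wedge n\), and pass to the limit with Fatou's lemma on the left and monotone convergence on the right. Fix \(P\in\Pcal\) and a stopping time \(\tau\) taking values in \(\{0,1,\ldots\}\cup\{\infty\}\). If \(\E_{P^\infty}\tau=\infty\), the inequality \eqref{eq:e-detector-inequality} is automatic by the extended-sense convention, so we may assume \(\E_{P^\infty}\tau<\infty\). This also covers the possibility that \(\tau=\infty\) with positive probability, since then \(\E_{P^\infty}\tau=\infty\).

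First I will record the pathwise identity \(\liminf_{n\to\infty}\Detector_{\tau\wedge n}=\Detector_\tau\). On \(\{\tau<\infty\}\) the sequence \(\Detector_{\tau\wedge n}\) is eventually equal to \(\Detector_\tau\). On \(\{\tau=\infty\}\) it is \(\Detector_n\), whose liminf is \(\Detector_\infty=\Detector_\tau\) by the convention in Definition~\ref{def:e-detector}.

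Next, each \(\tau\wedge n\) is a bounded stopping time, so the hypothesis gives \(\E_{P^\infty}\Detector_{\tau\wedge n}\le\E_{P^\infty}(\tau\wedge n)\). By nonnegativity and Fatou's lemma,
\[
  \E_{P^\infty}\Detector_\tau\le\liminf_{n\to\infty}\E_{P^\infty}\Detector_{\tau\wedge n}\le\liminf_{n\to\infty}\E_{P^\infty}(\tau\wedge n)=\E_{P^\infty}\tau,
\]
where the last equality is monotone convergence, since \(\tau\wedge n\uparrow\tau\).

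The argument in fact works whether or not \(\E_{P^\infty}\tau\) is finite, so the case split is only cosmetic. It also uses no property of the auxiliary randomization beyond the fact that the stopping times are taken in the enlarged filtration \((\F_t)\), which is preserved by \(\tau\wedge n\). I do not anticipate any real obstacle. The only point needing care is the \(\{\tau=\infty\}\) branch of the pathwise identity, which is exactly where the liminf convention for \(\Detector_\infty\) enters. Finally, \(\Detector_0=0\) and adaptedness are part of the definition and are assumed.
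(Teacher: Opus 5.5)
Your proposal is correct and is essentially the paper's proof: Fatou's lemma applied to \(\Detector_{\tau\wedge n}\) through the pathwise identity \(\liminf_n\Detector_{\tau\wedge n}=\Detector_\tau\), and monotone convergence applied to \(\tau\wedge n\). One small refinement is that you need not assume \(\Detector_0=0\). Apply the hypothesis to \(\sigma\equiv0\) to get \(\E_{P^\infty}\Detector_0\le0\); since \(\F_0\) is trivial, \(\Detector_0\) is a nonnegative constant and hence equals \(0\).
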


\begin{proof}
Apply Fatou's lemma to \(\Detector_{\tau\wedge n}\) and monotone convergence to \(\tau\wedge n\):
\[
  \E_{P^\infty}\Detector_\tau
  \le\liminf_{n\to\infty}\E_{P^\infty}\Detector_{\tau\wedge n}
  \le\lim_{n\to\infty}\E_{P^\infty}(\tau\wedge n)
  =\E_{P^\infty}\tau.
\]
\end{proof}

\begin{definition}[Optional-horizon ARL validity]\label{def:optional-arl}
Fix \(\gamma\ge1\).  An alarm time is an \((\F_t)\)-stopping time taking values in \(\{1,2,\ldots\}\cup\{\infty\}\).  Such an alarm time \(T\) is \emph{optionally ARL-valid at scale \(\gamma\)} for \(\Pcal\) if
\begin{equation}\label{eq:optional-horizon-arl}
  \Prob_{P^\infty}(T\le\sigma)
  \le \frac{\E_{P^\infty}\sigma}{\gamma}
  \qquad
  \text{for every }P\in\Pcal
  \text{ and every stopping time }\sigma.
\end{equation}
Here \(\{T\le\sigma\}\) means \(\{T<\infty,\ T\le\sigma\}\).  A family \((T_\gamma)_{\gamma\ge1}\) is \emph{optionally ARL-valid} if \(T_\gamma\) satisfies \eqref{eq:optional-horizon-arl} at scale \(\gamma\) for every \(\gamma\ge1\).
\end{definition}

\begin{proposition}[Fixed-scale universality of e-detectors]\label{prop:edetector-universality}
Fix \(\gamma\ge1\) and an alarm time \(T\).  The following are equivalent:
\begin{enumerate}[label=(\roman*)]
\item \(T\) is optionally ARL-valid at scale \(\gamma\);
\item there exists an e-detector \(\Detector\) for \(\Pcal\) such that
\[
  T=\inf\{t\ge1:\Detector_t\ge\gamma\}.
\]
\end{enumerate}
Under \textup{(i)}, the canonical witness is \(\Detector_t=\gamma\one\{T\le t\}\).
\end{proposition}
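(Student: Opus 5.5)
The proof proves the two implications separately.

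\emph{From (ii) to (i).} Let \(\Detector\) be an e-detector with \(T=\inf\{t\ge1:\Detector_t\ge\gamma\}\), and fix \(P\in\Pcal\) and a stopping time \(\sigma\). If \(\E_{P^\infty}\sigma=\infty\), the inequality is trivial, so assume it is finite; then \(\sigma<\infty\) almost surely. The time \(\sigma\wedge T\) is again a stopping time. On \(\{T\le\sigma\}\), which means \(T<\infty\), we have \(\Detector_{\sigma\wedge T}=\Detector_T\ge\gamma\). Nonnegativity of \(\Detector\) and the e-detector inequality \eqref{eq:e-detector-inequality} then give
\[
  \gamma\,\Prob_{P^\infty}(T\le\sigma)\le\E_{P^\infty}\Detector_{\sigma\wedge T}\le\E_{P^\infty}(\sigma\wedge T)\le\E_{P^\infty}\sigma .
\]
This is \eqref{eq:optional-horizon-arl}.

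\emph{From (i) to (ii).} Take the canonical witness \(\Detector_t:=\gamma\one\{T\le t\}\) for \(t\ge0\).
\begin{itemize}
\item It is adapted because \(T\) is a stopping time.
\item \(\Detector_0=0\), since \(T\ge1\).
\item It takes values in \(\{0,\gamma\}\subseteq[0,\infty]\).
\item Because \(\gamma\ge1>0\), the first time \(\Detector_t\ge\gamma\) is exactly the first \(t\) with \(T\le t\). Hence \(\inf\{t\ge1:\Detector_t\ge\gamma\}=T\), including when \(T=\infty\).
\end{itemize}
It remains to verify \eqref{eq:e-detector-inequality}. By Lemma~\ref{lem:bounded-stopping-edetector}, it suffices to treat bounded stopping times \(\sigma\), which avoids the liminf convention at infinity. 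For such \(\sigma\),
\[
  \E_{P^\infty}\Detector_\sigma=\gamma\,\Prob_{P^\infty}(T\le\sigma)\le\E_{P^\infty}\sigma,
\]
by \eqref{eq:optional-horizon-arl}. One could instead check unbounded \(\sigma\) directly, using \(\Detector_\infty=\gamma\one\{T<\infty\}\) and the convention \(\{T\le\infty\}=\{T<\infty\}\); the lemma makes this unnecessary.

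The argument needs no real obstacle. The only care required is in the bookkeeping: the interpretation of \(\{T\le\sigma\}\) when \(\sigma=\infty\), the case \(\E\sigma=\infty\), and the fact that stopping times here may take the value \(0\). This last point is harmless, since \(\Detector_0=0\) and \(T\ge1\). Throughout, the filtration is \((\F_t)\), possibly including independent auxiliary randomization; nothing in the argument uses its specific form.
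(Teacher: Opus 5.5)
Your proof is correct and follows the paper's argument: for (ii)$\Rightarrow$(i) you stop at $T\wedge\sigma$ and use $\Detector_T\ge\gamma$ on $\{T\le\sigma\}$, and for (i)$\Rightarrow$(ii) you verify the canonical witness $\gamma\one\{T\le t\}$. The only cosmetic difference is that you reduce to bounded stopping times via Lemma~\ref{lem:bounded-stopping-edetector}, whereas the paper checks every stopping time directly using the convention $\{T\le\sigma\}=\{T<\infty,\ T\le\sigma\}$ from Definition~\ref{def:optional-arl}.
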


\begin{proof}
Assume \textup{(ii)}, fix a stopping time \(\sigma\), and put \(\tau=T\wedge\sigma\).  On \(\{T\le\sigma\}\), one has \(\Detector_\tau=\Detector_T\ge\gamma\).  Hence
\[
  \gamma\Prob_{P^\infty}(T\le\sigma)
  \le \E_{P^\infty}\Detector_\tau
  \le \E_{P^\infty}\tau
  \le \E_{P^\infty}\sigma,
\]
which is \textup{(i)}.  Conversely, assume \textup{(i)} and set \(\Detector_t=\gamma\one\{T\le t\}\), with \(\Detector_0=0\).  For every stopping time \(\tau\), the convention in Definition~\ref{def:optional-arl} gives
\[
  \E_{P^\infty}\Detector_\tau
  =\gamma\Prob_{P^\infty}(T\le\tau)
  \le\E_{P^\infty}\tau.
\]
Thus \(\Detector\) is an e-detector and its level-\(\gamma\) crossing is exactly \(T\).  This is the canonical fixed-scale witness from the universality theorem in \cite{ramdas-edetector-universality-2026}.
\end{proof}

Every optionally ARL-valid family satisfies
\begin{equation}\label{eq:arl-lower}
  \operatorname{ARL}_{\Pcal}(T_\gamma)\ge\gamma.
\end{equation}
Indeed, fix \(P\in\Pcal\).  If \(\E_{P^\infty}T_\gamma=\infty\), there is nothing to prove.  Otherwise \(T_\gamma<\infty\) almost surely, and applying \eqref{eq:optional-horizon-arl} with \(T=T_\gamma\), scale \(\gamma\), and \(\sigma=T_\gamma\) gives \(1\le\E_{P^\infty}T_\gamma/\gamma\).

The equivalence in Proposition~\ref{prop:edetector-universality} is scale-by-scale: it does not assert that an arbitrary optionally ARL-valid family is generated by one common process.  The constructive direction of Theorem~\ref{thm:change-detection} below supplies that stronger conclusion.  Ramdas \cite{ramdas-edetector-universality-2026} calls the original notion in Definition~\ref{def:e-detector} a \emph{strong} e-detector only to distinguish it from a weak e-detector, which is universal for bare ARL control.  We retain the established unmodified term \emph{e-detector}.  The converse below uses optional-horizon validity, not bare ARL alone.

For a fixed post-change law \(Q\), use the robust Lorden delay
\begin{equation}\label{eq:lorden-delay}
  \mathcal J_Q(T)
  :=
  \sup_{P\in\Pcal}\sup_{\nu\ge1}
  \operatorname*{ess\,sup}
  \E_{\nu}^{P,Q}
  \left[(T-\nu+1)^+\mid\F_{\nu-1}\right].
\end{equation}
This controls the worst conditional expected delay over the pre-change law, changepoint, and pre-change history.  We call the pair \((\Pcal,\Qcal)\) \emph{pointwise change-detectable under optional-horizon ARL control} if there is an optionally ARL-valid family \((T_\gamma)_{\gamma\ge1}\) satisfying
\[
  \mathcal J_Q(T_\gamma)/\gamma\longrightarrow0
  \qquad\text{for every fixed }Q\in\Qcal.
\]
By Proposition~\ref{prop:edetector-universality}, this definition is equivalent at each fixed scale to e-detector representability and therefore does not restrict the alarm time to a narrower construction class.  The theorem below proves that the finite-block condition is exactly equivalent to this operational notion and, in addition, constructs one common e-detector.  Corollary~\ref{cor:geometric-power-one} gives the equivalent weak-$*$ closed-hull formulation of the finite-block condition.

\subsection{Complete characterization}

\begin{theorem}[Power-one testing and i.i.d. change detectability]
\label{thm:change-detection}
The following statements are equivalent.
\begin{enumerate}[label=(\roman*)]
\item The equivalent conditions of Theorem~\ref{thm:main} hold.

\item For every \(\eta>0\), there exists an optionally ARL-valid family \((T_\gamma)_{\gamma\ge1}\) satisfying
\begin{equation}\label{eq:calibrated-arl}
  \gamma
  \le \inf_{P\in\Pcal}\E_{P^\infty}T_\gamma
  \le \sup_{P\in\Pcal}\E_{P^\infty}T_\gamma
  \le (1+\eta)\gamma+1,
  \qquad \gamma\ge1,
\end{equation}
and, for every fixed \(Q\in\Qcal\), there are finite constants \(A_{Q,\eta}\) and \(B_Q\) such that
\begin{equation}\label{eq:log-delay}
  \mathcal J_Q(T_\gamma)
  \le A_{Q,\eta}+B_Q\log\gamma,
  \qquad \gamma\ge2.
\end{equation}

\item There exists an optionally ARL-valid family \((T_\gamma)_{\gamma\ge1}\) such that
\begin{equation}\label{eq:sublinear-delay}
  \frac{\mathcal J_Q(T_\gamma)}{\gamma}\longrightarrow0
  \qquad\text{as }\gamma\to\infty
  \quad\text{for every fixed }Q\in\Qcal.
\end{equation}

\item There exists an optionally ARL-valid family \((T_\gamma)_{\gamma\ge1}\) such that, for every \(Q\in\Qcal\), some integer \(\gamma\ge3\) satisfies
\begin{equation}\label{eq:one-scale-delay}
  \mathcal J_Q(T_\gamma)<\frac\gamma2.
\end{equation}
\end{enumerate}
Whenever these conditions hold, for every \(\eta>0\) the family in \textup{(ii)} may be chosen as the full threshold family of one e-detector.  Thus the common-process representation is an additional conclusion, not part of the operational definition.  Consequently, one may choose a family for which every null-law expected run length is finite and
\[
  \mathcal J_Q(T_\gamma)
  <\operatorname{ARL}_{\Pcal}(T_\gamma)
\]
for every fixed \(Q\) and all sufficiently large \(\gamma\), where the threshold from which the inequality holds may depend on \(Q\).  By Corollary~\ref{cor:geometric-power-one}, these conditions are also equivalent to \eqref{eq:countable-tv-cover}.
\end{theorem}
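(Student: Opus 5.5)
We prove the cycle \((i)\Rightarrow(ii)\Rightarrow(iii)\Rightarrow(iv)\Rightarrow(i)\). Most of the work is in the constructive step \((i)\Rightarrow(ii)\), which builds one e-detector.

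\emph{Construction for \((i)\Rightarrow(ii)\).} Take the countable family of positive block factors \(G_{m,\lambda}\) from the proof of Theorem~\ref{thm:main}, together with the weights \(w_{m,\lambda}\). Combine them in the standard e-detector way, as a Shiryaev--Roberts-type sum of restarted e-processes. For each start time \(j\ge1\), let \(\Eproc^{(j)}_t\) be the mixture e-process \(\Eproc\) of Theorem~\ref{thm:main} computed on \(X_j,X_{j+1},\ldots\), with \(\Eproc^{(j)}_t=0\) for \(t<j\). Define the detector by \(\Detector_t=\sum_{j\le t}\Eproc^{(j)}_t\).

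\emph{Validity of \(\Detector\).} For a bounded stopping time \(\sigma\), we have \(\E_{P^\infty}\Eproc^{(j)}_\sigma\one\{\sigma\ge j\}\le P^\infty(\sigma\ge j)\). This uses the e-process property of the shifted process applied to the stopping time \(\sigma\) restricted to \(\{\sigma\ge j\}\): since \(\{\sigma\ge j\}\in\F_{j-1}\), we can condition on it and use independence of the future block. Summing over \(j\) gives \(\E\Detector_\sigma\le\E\sigma\). Lemma~\ref{lem:bounded-stopping-edetector} then makes \(\Detector\) an e-detector, so Proposition~\ref{prop:edetector-universality} makes the whole threshold family \(T_\gamma\) optionally ARL-valid. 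The lower bound \(\gamma\) in \eqref{eq:calibrated-arl} is \eqref{eq:arl-lower}.

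\emph{Delay bound.} Fix \(Q\), its good component \((m,\lambda)\), and the changepoint \(\nu\). Nonnegativity gives \(\Detector_t\ge\Eproc^{(\nu)}_t\), and \(\Eproc^{(\nu)}\) depends only on post-change data, so it is independent of \(\F_{\nu-1}\). Hence the conditional delay given \(\F_{\nu-1}\) is at most the stopping time of Corollary~\ref{cor:testing-ess} at level \(1/\gamma\). This is uniform over \(P\), \(\nu\) and the pre-change history, which gives \eqref{eq:log-delay}.

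\emph{Main obstacle: the upper calibration in \eqref{eq:calibrated-arl}.} The raw detector may never alarm under some null, so we need finite null run lengths of size at most \((1+\eta)\gamma+1\). The plan is to use the independent mean-one calibration clock mentioned in the introduction. Fix \(\eta\). Add to \(\Detector\) an independent randomized component \(C_t=\gamma'\one\{U\le t\}\), and take a convex combination \(\Detector'=(1-\epsilon)\Detector+\epsilon\,\widetilde C\), with \(\widetilde C\) chosen as follows. Let \(\widetilde C_t=\gamma\one\{t\ge\zeta\}\), where \(\zeta\) is independent and geometric with mean \(\approx(1+\eta)\gamma\), scaled so that \(\E\widetilde C_\tau\le\E\tau\). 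Then \(\widetilde C\) is an e-detector, because \(\gamma P(\zeta\le\tau)\le\E\tau\) is a bound on the hazard of a geometric variable. The alarm time is forced to be at most \(\zeta\) plus one step, which gives \(\sup_P\E T_\gamma\le(1+\eta)\gamma+1\). The delay only picks up the constant factor \(1/(1-\epsilon)\), absorbed into \(A_{Q,\eta}\). The point I expect to be delicate is choosing the mixing weights so that the crossing threshold remains exactly \(\gamma\) while both ARL bounds hold. If this fails, I would instead use the clock as an absorbing alarm at an independent geometric time of mean \(\eta\gamma\)-adjusted, added as a separate e-detector summand.

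\emph{Remaining implications.} \((ii)\Rightarrow(iii)\Rightarrow(iv)\) are immediate, since \(\log\gamma/\gamma\to0\). For \((iv)\Rightarrow(i)\), fix \(Q\) and its scale \(\gamma\ge3\), and take \(\nu=1\) and any \(P\). Markov's inequality gives \(Q^\infty(T_\gamma\le k)\ge1-\gamma/(2k)\). Optional validity with \(\sigma=k\) gives \(\sup_P P^\infty(T_\gamma\le k)\le k/\gamma\). Choosing \(k=\lfloor\gamma/2\rfloor\) or a nearby integer, the requirement \(\gamma\ge3\) ensures \(1-\gamma/(2k)>k/\gamma\) for a suitable \(k\); we check this arithmetic when writing the proof. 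This produces the event \(B=\{T_\gamma\le k\}\in\F_k\), and indexing over the countably many pairs \((\gamma,k)\) yields condition~(iv) of Theorem~\ref{thm:main}.

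If auxiliary randomization is present, we replace \(\one_B\) by its conditional expectation given the data, which is a randomized test, and obtain condition~(v) of Theorem~\ref{thm:main}. The final claims follow directly. We have \(\mathcal J_Q=O(\log\gamma)<\gamma\le\operatorname{ARL}\) for all large \(\gamma\), and the equivalence with \eqref{eq:countable-tv-cover} is Corollary~\ref{cor:geometric-power-one}.
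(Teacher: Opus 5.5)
Your proposal has a genuine gap in the converse. The calibration step is also incorrect as written, and once repaired it still misses the common-detector conclusion.

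\emph{The converse $(iv)\Rightarrow(i)$ fails at exactly the arithmetic you deferred.} For every $k\ge1$, AM--GM gives $\gamma/(2k)+k/\gamma\ge\sqrt2>1$, so $1-\gamma/(2k)<k/\gamma$. Markov's inequality at a single horizon therefore never beats the null bound $k/\gamma$, for any $\gamma$ and $k$. Putting $k+1$ in the Markov denominator does not help when $\E_{Q^\infty}T_\gamma$ is close to $\gamma/2$.

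A single horizon discards too much; the missing idea is to use all horizons $t<\gamma$ at once. The paper uses the triangular test $\psi_\gamma=(\gamma-T_\gamma)^+/\gamma=\gamma^{-1}\sum_{t=1}^{\gamma-1}\one\{T_\gamma\le t\}$. Its null size is at most $\gamma^{-1}\sum_{t<\gamma}t/\gamma=(\gamma-1)/(2\gamma)<1/2$, and its $Q$-power is at least $1-\E_{Q^\infty}T_\gamma/\gamma>1/2$. This is why the constant in \eqref{eq:one-scale-delay} is exactly $1/2$.

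If you want to keep single events, a pigeonhole form of the same computation works: $\sum_{k=0}^{\gamma-1}Q^\infty(T_\gamma>k)\le\E_{Q^\infty}T_\gamma<\gamma/2<(\gamma+1)/2=\sum_{k=0}^{\gamma-1}(1-k/\gamma)$. Hence some $k\in\{1,\dots,\gamma-1\}$ satisfies $Q^\infty(T_\gamma\le k)>k/\gamma\ge\sup_{P\in\Pcal}P^\infty(T_\gamma\le k)$.

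\emph{The calibration, and with it the common e-detector, is not established.} As written, $\epsilon\widetilde C_t\le\epsilon\gamma<\gamma$, so the clock never forces a crossing of $\gamma$. You can repair this at one scale: take $\widetilde C_t=(\gamma/\epsilon)\one\{t\ge\zeta\}$ with $\zeta$ geometric of parameter $\epsilon/\gamma$ and $\epsilon=(1+\eta)^{-1}$. That yields condition (ii) scale by scale via Proposition~\ref{prop:edetector-universality}. However, the resulting detector depends on $\gamma$, so you lose the stated conclusion that one e-detector generates the entire calibrated family.

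The paper instead uses a threshold-free clock $\Gamma_t=\sum_{s\le t}U_s$, with $U_s$ i.i.d.\ $\operatorname{Exp}(1)$ and independent of the data. Because $\Gamma_t-t$ is a martingale, $\Gamma$ is itself an e-detector. Its first passage time $H_x$ above any level $x>0$ satisfies $\E H_x=x+1$. Hence $(1-\rho)\Detector+\rho\Gamma$ with $\rho=(1+\eta)^{-1}$ satisfies $T_\gamma\le H_{\gamma/\rho}$ for all $\gamma$ simultaneously; the deterministic clock $\Gamma_t=t$ also works.

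Your fallback of adding a clock as a separate summand, rather than as a convex component, would break validity. A sum of two e-detectors only satisfies $\E_{P^\infty}\Detector_\tau\le2\E_{P^\infty}\tau$.

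The rest of your argument is sound: the restarted mixture detector, validity by conditioning on $\{\sigma\ge j\}\in\F_{j-1}$, and the delay bound via Corollary~\ref{cor:testing-ess} at level $(1-\epsilon)/\gamma$. It matches the paper except that you restart offset-averaged mixtures where the paper restarts single-offset grid products.
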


\begin{proof}
We first prove (i)\(\Rightarrow\)(ii).  Fix \(\eta>0\).  On a product extension, let \((U_t)_{t\ge1}\) be i.i.d. \(\operatorname{Exp}(1)\), independent of the observations, and in this implication take
\[
  \F_t:=\sigma(X_1,\ldots,X_t,U_1,\ldots,U_t).
\]
Let \((n_m,h_m)\) satisfy condition~\textup{(vi)} of Theorem~\ref{thm:main}.  Put
\(a_m=\inf h_m\), \(H_m=\lVert h_m\rVert_\infty\), and
\[
  \mathcal K_m:=
  \begin{cases}
    \mathbb Q\cap(0,-1/a_m),&a_m<0,\\
    \mathbb Q\cap(0,\infty),&a_m\ge0.
  \end{cases}
\]
\emph{Restarted detector.}
Index the pairs \((m,\lambda)\), \(\lambda\in\mathcal K_m\), by \(\iota\in\mathcal I\).  Write
\[
  N_\iota=n_m,
  \qquad
  G_\iota=1+\lambda h_m.
\]
Then \(G_\iota\) is bounded and bounded away from zero, and
\begin{equation}\label{eq:block-e-variable}
  \sup_{P\in\Pcal}\E_{P^{N_\iota}}G_\iota\le1.
\end{equation}
Choose \(w_\iota>0\) with \(\sum_{\iota\in\mathcal I}w_\iota=1\).

For a candidate changepoint \(k\ge1\), define
\[
  \Pi_j^{k,\iota}
  :=\prod_{q=1}^j
  G_\iota\bigl(
    X_{k+(q-1)N_\iota},\ldots,X_{k+qN_\iota-1}
  \bigr),
  \qquad j\ge1,
\]
and
\begin{equation}\label{eq:sparse-restart}
  \Xi_t^{k,\iota}:=
  \begin{cases}
    \Pi_j^{k,\iota},&t=k-1+jN_\iota\text{ for some }j\ge1,\\
    0,&\text{otherwise.}
  \end{cases}
\end{equation}
Conditional on \(\F_{k-1}\), \((\Pi_j^{k,\iota})_{j\ge0}\), with \(\Pi_0^{k,\iota}=1\), is a nonnegative supermartingale on the enlarged block grid \((\F_{k-1+jN_\iota})_{j\ge0}\); the independent variables \((U_t)\) do not change the conditional means of the observation-block factors.  If \(\sigma\) is bounded, define \(J=j\) on \(\{\sigma=k-1+jN_\iota\}\), \(j\ge1\), and \(J=\infty\) otherwise.  This is a stopping time for the block-grid filtration because
\[
  \{J\le j\}
  =\bigcup_{r=1}^j\{\sigma=k-1+rN_\iota\}
  \in\F_{k-1+jN_\iota}.
\]
Moreover, \(\{\sigma\ge k\}\in\F_{k-1}\).  For each integer \(K\), conditional optional sampling and nonnegativity therefore give
\[
  \E_{P^\infty}\!\left[
    \one\{\sigma\ge k\}\Pi_J^{k,\iota}\one\{J\le K\}
    \mid\F_{k-1}
  \right]
  \le \one\{\sigma\ge k\}.
\]
Letting \(K\to\infty\) and applying conditional monotone convergence yields
\begin{equation}\label{eq:restart-bound}
  \E_{P^\infty}
  \left[
    \one\{\sigma\ge k\}\Xi_\sigma^{k,\iota}
    \mid\F_{k-1}
  \right]
  \le\one\{\sigma\ge k\}.
\end{equation}

Define the possibly extended-valued process
\begin{equation}\label{eq:sr-detector}
  \Detector_t:=\sum_{k=1}^t\sum_{\iota\in\mathcal I}
  w_\iota\Xi_t^{k,\iota}.
\end{equation}
For every bounded stopping time \(\sigma\), Tonelli's theorem and \eqref{eq:restart-bound} give
\[
  \E_{P^\infty}\Detector_\sigma
  \le\sum_{k\ge1}\Prob_{P^\infty}(\sigma\ge k)
  =\E_{P^\infty}\sigma.
\]
Lemma~\ref{lem:bounded-stopping-edetector} shows that \(\Detector\) is an e-detector, finite at every deterministic time almost surely under each null.

\emph{Calibration.}
Put \(\rho=(1+\eta)^{-1}\) and set
\[
  \Gamma_t:=\sum_{s=1}^tU_s.
\]
The process \((\Gamma_t-t)_{t\ge0}\) is a martingale for \((\F_t)\).  Hence bounded optional sampling gives \(\E\Gamma_\sigma=\E\sigma\) for every bounded \((\F_t)\)-stopping time \(\sigma\), and Lemma~\ref{lem:bounded-stopping-edetector} shows that \(\Gamma\) is an e-detector.  For the weighted process
\begin{equation}\label{eq:calibrated-detector}
  \widetilde{\Detector}_t
  :=(1-\rho)\Detector_t+\rho\Gamma_t,
\end{equation}
the defining inequality holds at every bounded stopping time by linearity; another application of Lemma~\ref{lem:bounded-stopping-edetector} therefore shows that \(\widetilde{\Detector}\) is an e-detector.  Proposition~\ref{prop:edetector-universality} shows that its full threshold family \((\widetilde T_\gamma)_{\gamma\ge1}\) is optionally ARL-valid.  If \(H_x=\inf\{t\ge1:\Gamma_t\ge x\}\), then \(H_x-1\) is the number of arrivals of a unit-rate Poisson process by time \(x\), so \(\E H_x=x+1\).  Since \(\widetilde{\Detector}_t\ge\rho\Gamma_t\),
\(\widetilde T_\gamma\le H_{\gamma/\rho}\).  Together with \eqref{eq:arl-lower}, this proves \eqref{eq:calibrated-arl}.

\emph{Detection delay.}
Fix \(Q\in\Qcal\), and choose \(m\) with
\(\mu=\E_{Q^{n_m}}h_m>0\).  Select a rational \(\lambda\in\mathcal K_m\) such that
\[
  \lambda H_m\le\frac12,
  \qquad
  \lambda H_m^2\le\frac\mu2,
\]
and let \(\iota=(m,\lambda)\).  Then
\begin{equation}\label{eq:positive-log-drift}
  g:=\E_{Q^{N_\iota}}\log G_\iota
  \ge\lambda\mu-\lambda^2\E_{Q^{n_m}}h_m^2
  \ge\frac{\lambda\mu}{2}>0.
\end{equation}
If the change occurs at time \(\nu\), the component indexed by \((k,\iota)=(\nu,\iota)\) uses only post-change data.  Put
\[
  Y_j:=\sum_{q=1}^j
  \log G_\iota\bigl(
    X_{\nu+(q-1)N_\iota},\ldots,
    X_{\nu+qN_\iota-1}
  \bigr),
\]
\[
  \sigma_b:=\inf\{j\ge1:Y_j\ge b\},
  \qquad
  b_\gamma:=\log\frac{\gamma}{(1-\rho)w_\iota}.
\]
Equations \eqref{eq:sparse-restart}, \eqref{eq:sr-detector}, and \eqref{eq:calibrated-detector} imply
\begin{equation}\label{eq:delay-by-hitting-time}
  (\widetilde T_\gamma-\nu+1)^+
  \le N_\iota\sigma_{b_\gamma}.
\end{equation}
Conditional on \(\F_{\nu-1}\), the increments of \(Y_j\) are i.i.d., bounded, independent of the pre-change history, and have mean \(g\).  If they take values in an interval of length \(\Delta>0\), then for \(j\ge2b/g\), Hoeffding's inequality \cite{hoeffding1963} gives
\[
  \Prob_{\nu}^{P,Q}(\sigma_b>j\mid\F_{\nu-1})
  \le\exp\left(-\frac{jg^2}{2\Delta^2}\right).
\]
Thus
\begin{equation}\label{eq:hitting-time-bound}
  \E_{\nu}^{P,Q}[\sigma_b\mid\F_{\nu-1}]
  \le
  \left\lceil\frac{2b}{g}\right\rceil
  +\frac{1}{1-\exp(-g^2/(2\Delta^2))}.
\end{equation}
If \(\Delta=0\), then \(\sigma_b\le\lceil b/g\rceil\).  The right-hand sides are deterministic and independent of \(P\), \(\nu\), and the realized pre-change history.  Therefore the same bound remains valid after taking the essential supremum and both outer suprema in \(\mathcal J_Q\).  Combining these bounds with \eqref{eq:delay-by-hitting-time} proves \eqref{eq:log-delay}; relabel \(\widetilde{\Detector}\) as \(\Detector\).

\emph{Converse.}
The implications (ii)\(\Rightarrow\)(iii)\(\Rightarrow\)(iv) are immediate.  Assume (iv), and let \((T_\gamma)_{\gamma\ge1}\) be the optionally ARL-valid family appearing there.  For an integer \(\gamma\ge3\), define
\[
  \psi_\gamma:=\frac{(\gamma-T_\gamma)^+}{\gamma}
  =\frac1\gamma\sum_{t=1}^{\gamma-1}\one\{T_\gamma\le t\}.
\]
For each \(t<\gamma\), the event \(\{T_\gamma\le t\}\) depends only on the first \(t\) observations and the auxiliary variables revealed through time \(t\).  Thus \(\psi_\gamma\) depends only on \((X_1,\ldots,X_{\gamma-1})\) and the corresponding finite randomization.  Integrating out that randomization gives the measurable test
\begin{equation}\label{eq:detector-to-test}
  \varphi_\gamma(x_1,\ldots,x_{\gamma-1})
  :=\E_U[\psi_\gamma(x_1,\ldots,x_{\gamma-1},U)].
\end{equation}
For \(1\le t<\gamma\), optional-horizon validity at the deterministic horizon \(t\) gives
\begin{equation}\label{eq:early-alarm-bound}
  \gamma\Prob_{P^\infty}(T_\gamma\le t)\le t.
\end{equation}
Consequently,
\begin{equation}\label{eq:test-null-size}
  \sup_{P\in\Pcal}\E_{P^\infty}\varphi_\gamma
  \le\frac1\gamma\sum_{t=1}^{\gamma-1}\frac{t}{\gamma}
  =\frac{\gamma-1}{2\gamma}<\frac12.
\end{equation}
For a change at the first observation,
\(\E_{Q^\infty}T_\gamma\le\mathcal J_Q(T_\gamma)\).  Whenever \eqref{eq:one-scale-delay} holds,
\begin{equation}\label{eq:test-alt-power}
  \E_{Q^\infty}\varphi_\gamma
  \ge1-\frac{\E_{Q^\infty}T_\gamma}{\gamma}
  >\frac12.
\end{equation}
Thus \((\gamma-1,\varphi_\gamma)_{\gamma\ge3}\) satisfies condition~\textup{(v)} of Theorem~\ref{thm:main}.  The common-e-detector conclusion follows from the construction in the implication (i)\(\Rightarrow\)(ii), and the final assertion follows from \eqref{eq:calibrated-arl}, \eqref{eq:log-delay}, and \(\log\gamma=o(\gamma)\).
\end{proof}

\begin{remark}[The Lorden lower bound and the logarithmic scale]\label{rem:lorden-lower-bound}
The logarithmic delay in Theorem~\ref{thm:change-detection} is also optimal in order in the classical dominated setting.  For simple pre- and post-change laws \(P,Q\) with \(I=\KL(Q\|P)\in(0,\infty)\), Lorden's asymptotic lower bound gives, under its standard regularity assumptions,
\[
  \inf_{T:\,\E_{P^\infty}T\ge\gamma}\mathcal J_Q(T)
  \ge (1-o(1))\frac{\log\gamma}{I}
  \qquad(\gamma\to\infty)
\]
\cite{lorden1971,tartakovsky-et-al-2014}.  In composite models, the corresponding robust benchmark is naturally \(\inf_{P\in\Pcal}\KL(Q\|P)\) when the classical change-of-measure arguments apply uniformly.  General composite lower bounds and matching sharp results for bounded means are developed in \cite{ram-ramdas-bounded-means-2026}.  The present theorem identifies when an \(O_Q(\log\gamma)\) regime is attainable under optional-horizon ARL control; its constructive proof in fact supplies one common e-detector, while leaving sharp constants to more quantitative theory.
\end{remark}

\begin{remark}[External randomization does not enlarge the existence class]\label{rem:no-randomization}
Theorem~\ref{thm:main} already shows that randomized finite-block certificates do not enlarge the class of sequentially testable pairs, because they are equivalent to deterministic events and to a nonrandomized stopping rule.  The same is true for change detection.  In the preceding proof, the exponential clock \(\Gamma_t\) may be replaced by the deterministic e-detector \(\Gamma_t=t\).  Then \(\widetilde T_\gamma\le\lceil\gamma/\rho\rceil\) pathwise, with the same logarithmic evidence bound.

The exponential clock is nevertheless a more natural calibration device because it creates no deterministic terminal time.  Conditional on the current clock value \(\Gamma_t=c<\gamma/\rho\), its expected additional number of increments needed to cross is \(\gamma/\rho-c+1\); the remaining false-alarm budget is random and state dependent.  By contrast, the deterministic clock forces an alarm at a known deadline and its residual time collapses pathwise as that deadline approaches.  Thus randomization is unnecessary for existence, but the exponential clock better reflects residual-ARL reasoning.
\end{remark}

\begin{example}[One-sided mean changes]\label{ex:mean-change}
Let \(\X=[0,1]\), fix \(m\in(0,1)\), and take
\[
  \Pcal=\{P:\E_PX\le m\},
  \qquad
  \Qcal=\{Q:\E_QX>m\}.
\]
The one-observation score \(h(x)=x-m\) satisfies \(\sup_{P\in\Pcal}\E_Ph\le0\), while every \(Q\in\Qcal\) has
\[
  \delta_Q:=\E_QX-m=\E_Qh>0.
\]
Thus \Cref{thm:main,thm:change-detection} apply.  The drift calculation also displays the delay scale.  For \(0<\lambda\le1/2\), put \(G_\lambda=1+\lambda h\).  Since \(|h|\le1\) and \(\log(1+u)\ge u-u^2\) for \(|u|\le1/2\),
\[
  \E_Q\log G_\lambda\ge\lambda\delta_Q-\lambda^2.
\]
Choosing a rational \(\lambda\in[\delta_Q/4,\delta_Q/2]\) gives
\[
  \E_Q\log G_\lambda\ge\frac{3}{16}\delta_Q^2.
\]
Consequently, for each fixed calibration parameter \(\eta>0\), the construction in the proof of Theorem~\ref{thm:change-detection} yields a finite constant \(C_{Q,\eta}\) and a universal \(C\) such that
\[
  \mathcal J_Q(T_\gamma)
  \le C_{Q,\eta}+C\delta_Q^{-2}\log\gamma,
  \qquad \gamma\ge2.
\]
The constant \(C_{Q,\eta}\) includes the calibration and the fixed mixture-weight penalty for the selected rational \(\lambda\).

For \(0<\Delta\le1-m\), consider the uniformly separated class
\[
  \Qcal_\Delta:=\{Q:\E_QX\ge m+\Delta\}.
\]
The single choice \(\lambda=\Delta/4\) has log drift at least \(3\Delta^2/16\).  For this restricted post-change problem, use the single-component detector generated by this factor, with mixture weight one, before adding the calibration clock; equivalently, use the construction in Corollary~\ref{cor:uniform-change} below.  The log-increment range is at most a universal constant times \(\Delta\), so \eqref{eq:hitting-time-bound} yields, for each fixed calibration parameter \(\eta>0\),
\[
  \sup_{Q\in\Qcal_\Delta}\mathcal J_Q(T_\gamma)
  \le C_\eta\Delta^{-2}(1+\log\gamma),
\]
where \(C_\eta\) is independent of \(\Delta\).  If one instead embeds the factor in a universal countable mixture, an additional \(\Delta^{-2}\log(1/w_\Delta)\) term records the weight assigned to the chosen rational betting fraction.
No uniform conclusion is possible over all of \(\Qcal\).  Taking \(P_0=\delta_m\) and \(Q_\varepsilon=(1-\varepsilon)\delta_m+\varepsilon\delta_1\), one has \(Q_\varepsilon\in\Qcal\) but, for each fixed \(n\),
\[
  d_{\TV}(Q_\varepsilon^n,P_0^n)
  =1-(1-\varepsilon)^n\longrightarrow0.
\]
Thus every fixed positive mean gap is detected with an \(O_Q(\log\gamma)\) bound, while the constants necessarily deteriorate as the post-change law approaches the null boundary.  Sharp first-order lower and upper bounds for bounded-mean changes are studied in \cite{ram-ramdas-bounded-means-2026}.
\end{example}

\begin{corollary}[Uniform post-change class]\label{cor:uniform-change}
The following are equivalent; condition~\textup{(i)} is also equivalent to the uniform testing conditions of Proposition~\ref{prop:uniform-testing}.
\begin{enumerate}[label=(\roman*)]
\item For some \(n\ge1\), there exists a randomized test \(\varphi:\X^n\to[0,1]\) such that
\[
  \inf_{Q\in\Qcal}\E_{Q^n}\varphi
  >\sup_{P\in\Pcal}\E_{P^n}\varphi.
\]

\item For every \(\eta>0\), there is an optionally ARL-valid family satisfying \eqref{eq:calibrated-arl} and
\[
  \sup_{Q\in\Qcal}\mathcal J_Q(T_\gamma)=O_\eta(\log\gamma).
\]

\item There is an optionally ARL-valid family for which
\[
  \sup_{Q\in\Qcal}
  \frac{\mathcal J_Q(T_\gamma)}{\gamma}
  \longrightarrow0
  \qquad\text{as }\gamma\to\infty.
\]
\end{enumerate}
In condition~\textup{(ii)}, the family may be chosen as the threshold family of one common e-detector.  In this case, one common \(\gamma_0\) satisfies
\[
  \mathcal J_Q(T_\gamma)
  <\operatorname{ARL}_{\Pcal}(T_\gamma)
  \qquad
  \text{for every }Q\in\Qcal
  \text{ and every }\gamma\ge\gamma_0.
\]
By \eqref{eq:block-tv-identity}, condition~\textup{(i)} is equivalently
\[
  d_{\TV}\bigl(\mathscr C_n,\mathscr D_n(\Qcal)\bigr)>0
\]
for some \(n\ge1\).
\end{corollary}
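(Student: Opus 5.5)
We prove (i)\(\Rightarrow\)(ii)\(\Rightarrow\)(iii)\(\Rightarrow\)(i). The equivalence of (i) with the uniform testing conditions of Proposition~\ref{prop:uniform-testing}, and with the total-variation formulation, is immediate: the first condition of that proposition is literally (i), and \eqref{eq:block-tv-identity} translates (i) into positive distance.

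For (i)\(\Rightarrow\)(ii), we rerun the constructive half of the proof of Theorem~\ref{thm:change-detection} with a single component. As in Proposition~\ref{prop:uniform-testing}, put \(c=\sup_P\E_{P^n}\varphi\), \(\delta=\inf_Q\E_{Q^n}\varphi-c>0\), \(h=\varphi-c\), \(\lambda=\delta/2\) and \(G=1+\lambda h\). Then \(G\) is positive and bounded, has null expectation at most one, and has uniform log drift \(g_0\ge\delta^2/4\) with a \(Q\)-independent range \(\Delta_G\). We take the restarted Shiryaev--Roberts-type detector \eqref{eq:sr-detector} with the index set \(\mathcal I\) reduced to this one factor and weight \(w=1\). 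The e-detector property follows verbatim from \eqref{eq:restart-bound} and Lemma~\ref{lem:bounded-stopping-edetector}. We then calibrate with the exponential clock exactly as in \eqref{eq:calibrated-detector}, which yields \eqref{eq:calibrated-arl} and optional ARL validity via Proposition~\ref{prop:edetector-universality}.

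For the delay, \eqref{eq:delay-by-hitting-time} holds with \(b_\gamma=\log(\gamma/(1-\rho))\). The bound \eqref{eq:hitting-time-bound}, with \(g=g_0\) and \(\Delta=\Delta_G\), is deterministic and does not depend on \(P\), \(\nu\), the pre-change history, or on \(Q\in\Qcal\). Hence \(\sup_Q\mathcal J_Q(T_\gamma)\le n(2b_\gamma/g_0+C)\), which is \(O_\eta(\log\gamma)\). The only point needing care is that every constant is uniform. This is where the pointwise proof would fail, since there the chosen component and its weight depend on \(Q\); a single uniform certificate removes that dependence.

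The common threshold \(\gamma_0\) comes from the same bound. We have \(\operatorname{ARL}_{\Pcal}(T_\gamma)\ge\gamma\) by \eqref{eq:arl-lower}, while \(\sup_Q\mathcal J_Q(T_\gamma)\le A+B\log\gamma\) with constants \(A,B\) that do not depend on \(Q\). So \(\gamma_0\) can be taken to be any \(\gamma\) with \(A+B\log\gamma<\gamma\) beyond it. Finally, (ii)\(\Rightarrow\)(iii) holds because \(\log\gamma=o(\gamma)\).

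For (iii)\(\Rightarrow\)(i), we use the converse argument of Theorem~\ref{thm:change-detection} at one fixed scale. By (iii) we can choose an integer \(\gamma\ge3\) with \(\sup_Q\mathcal J_Q(T_\gamma)<\gamma/2\). We form \(\varphi_\gamma\) as in \eqref{eq:detector-to-test}, a test on \(\gamma-1\) observations. Then \eqref{eq:test-null-size} gives \(\sup_P\E_{P^\infty}\varphi_\gamma\le(\gamma-1)/(2\gamma)<1/2\). Since \(\E_{Q^\infty}T_\gamma\le\mathcal J_Q(T_\gamma)\) (take \(\nu=1\)), the bound \eqref{eq:test-alt-power} gives, uniformly in \(Q\),
\[
  \inf_{Q\in\Qcal}\E_{Q^\infty}\varphi_\gamma\ge1-\frac{\sup_Q\mathcal J_Q(T_\gamma)}{\gamma}>\frac12.
\]
This proves (i) with \(n=\gamma-1\).
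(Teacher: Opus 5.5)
Your proposal is correct and follows the paper's proof essentially step for step. For (i)$\Rightarrow$(ii) you take the single uniform factor from the proof of Proposition~\ref{prop:uniform-testing}, use it as the only component of the restarted, clock-calibrated detector, and apply Hoeffding with $g$ replaced by the uniform lower bound $g_0$, so the constants are $Q$-independent; you then get (ii)$\Rightarrow$(iii) from $\log\gamma=o(\gamma)$, (iii)$\Rightarrow$(i) from the triangular test \eqref{eq:detector-to-test} at one large integer scale, and the common $\gamma_0$ from \eqref{eq:arl-lower}.
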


\begin{proof}
By the proof of Proposition~\ref{prop:uniform-testing}, condition~\textup{(i)} supplies a single bounded block factor with null expectation at most one, uniformly positive logarithmic drift over \(\Qcal\), and a bounded log-increment range.  Using this factor as the single evidence component in the construction of Theorem~\ref{thm:change-detection} yields condition~\textup{(ii)} with constants uniform in \(Q\), and the resulting family is generated by one e-detector.  The implication (ii)\(\Rightarrow\)(iii) is immediate.  If (iii) holds, choose one sufficiently large integer \(\gamma\) such that \(\sup_Q\mathcal J_Q(T_\gamma)<\gamma/2\).  The triangular test in the converse proof of Theorem~\ref{thm:change-detection}, defined in \eqref{eq:detector-to-test}, then has null size below \(1/2\) and power above \(1/2\) uniformly over \(\Qcal\).  The resulting test is condition~(i).  The last statement follows from the optional-horizon ARL lower bound \eqref{eq:arl-lower}.
\end{proof}

\begin{remark}[A uniform--pointwise--impossible trichotomy]\label{rem:trichotomy}
A finite cover by uniformly separated block classes is equivalent to one uniform finite-block certificate.  Indeed, amplify each of the finitely many certificates as in Remark~\ref{rem:amplification} on disjoint blocks, and reject if any amplified event occurs; with sufficiently small component errors, the union has a positive uniform power--size gap.  The main results therefore divide the problem into three regimes:
\begin{center}
\small
\renewcommand{\arraystretch}{1.25}
\begin{tabular}{@{}>{\raggedright\arraybackslash}p{0.17\textwidth}>{\raggedright\arraybackslash}p{0.40\textwidth}>{\raggedright\arraybackslash}p{0.34\textwidth}@{}}
\textbf{Regime} & \textbf{Finite-block structure} & \textbf{Sequential consequence} \\
\hline
Uniform & One uniform certificate, equivalently a finite positive-margin cover & Uniform testing by Proposition~\ref{prop:uniform-testing} and optionally ARL-valid detection with \(\sup_Q\mathcal J_Q(T_\gamma)=O(\log\gamma)\); Example~\ref{ex:mean-change} with \(\Qcal_\Delta\) \\
Pointwise only & A countable cover exists but no uniformly separated finite cover does & Pointwise \(O_Q(\log(1/\alpha))\) testing and \(O_Q(\log\gamma)\) detection-delay upper bounds; the full mean-shift class in Example~\ref{ex:mean-change} \\
Impossible & No countable cover by finite-block certificates & No power-one test and no optionally ARL-valid family with pointwise sub-ARL delay; Example~\ref{ex:countable-cocountable}
\end{tabular}
\end{center}
\end{remark}

\begin{remark}[Why the optional-horizon ARL formulation is nonvacuous]
Optional-horizon validity implies both robust ARL at least \(\gamma\) and the finite-horizon bound \(\Prob_{P^\infty}(T_\gamma\le t)\le t/\gamma\), so it rules out obtaining a large mean merely by concentrating excessive false-alarm probability near the beginning and compensating with a long right tail.  It does not, by itself, force the null-law expected run lengths to be finite.  For any \(\eta>0\), the calibrated common e-detector \eqref{eq:calibrated-detector} forces every null-law expected run length into the finite interval \([\gamma,(1+\eta)\gamma+1]\) while preserving an \(O_Q(\log\gamma)\) detection-delay bound.  The independent exponential clock has unbounded support and avoids a predetermined alarm deadline; Remark~\ref{rem:no-randomization} records the deterministic alternative and explains the residual-time distinction.  In either form, every calibrated null-law expected run length and the threshold remain on the same scale.
\end{remark}

\begin{example}[A bare comparison with robust ARL can be vacuous]\label{ex:infinite-arl}
Return to Example~\ref{ex:two-observations} and define
\[
  T:=\inf\{t\ge2:X_t\ne X_{t-1}\}.
\]
Under every no-change law \(\delta_x^\infty\), one has \(T=\infty\) almost surely, so the robust ARL is infinite.  Under any non-point-mass post-change law \(Q\), let
\[
  \beta_Q:=Q^2(X_1\ne X_2)>0.
\]
After any changepoint, inspect disjoint pairs of post-change observations.  Their inequality indicators are independent Bernoulli variables with success probability \(\beta_Q\).  Ignoring any earlier alarm therefore gives the robust bound
\[
  \mathcal J_Q(T)\le\frac{2}{\beta_Q}<\infty.
\]
Hence \(\mathcal J_Q(T)<\operatorname{ARL}_{\Pcal}(T)\) holds for every \(Q\), but only because the right-hand side is infinite.  The calibrated optionally ARL-valid family in Theorem~\ref{thm:change-detection} rules out this vacuous mechanism.
\end{example}

The next result shows that the finite delay in Example~\ref{ex:infinite-arl} is not merely an artifact of an infinite, and hence vacuous, ARL: zero-null-probability certificates retain threshold-uniform delay after finite-ARL calibration.

\begin{corollary}[Sure certificates yield threshold-uniform delay]\label{cor:sure-certificates}
Suppose there is a countable family \((n_m,B_m)\) such that
\[
  \sup_{P\in\Pcal}P^{n_m}(B_m)=0
\]
for every \(m\), and every \(Q\in\Qcal\) has \(Q^{n_m}(B_m)>0\) for at least one \(m\).  Then, for every \(\eta>0\), the calibrated detector can be chosen so that \eqref{eq:calibrated-arl} holds and
\[
  \sup_{\gamma\ge2}\mathcal J_Q(T_\gamma)<\infty
  \qquad\text{for every fixed }Q\in\Qcal.
\]
\end{corollary}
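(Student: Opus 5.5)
The plan is to rerun the construction in the proof of Theorem~\ref{thm:change-detection}, replacing the bounded scores by sure-certificate factors that can be made arbitrarily large. For each \(m\), set \(h_m:=\one_{B_m}\). Under every null, \(h_m=0\) almost surely on \(n_m\)-blocks. Use the unbounded family of multiplicative factors
\[
  G_{m,\lambda}:=1+\lambda\one_{B_m},\qquad \lambda\in\{1,2,2^2,\ldots\}.
\]
More simply, use the single factor \(G_{m,\lambda}=\lambda\one_{B_m}+\one_{B_m^c}\) with \(\lambda\) ranging over \(2^r\), \(r\ge1\); each has null expectation exactly one. Better still is to choose, for each \(m\), a factor whose value on \(B_m\) is infinite. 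Put \(G_m:=1+\infty\cdot\one_{B_m}\), with the convention \(\infty\cdot0=0\). Then \(\E_{P^{n_m}}G_m=1\) for every \(P\in\Pcal\), because \(P^{n_m}(B_m)=0\). The restart argument leading to \eqref{eq:restart-bound} holds verbatim with extended values, since only nonnegativity, Tonelli, and conditional monotone convergence were used. So \(\Detector_t=\sum_k\sum_m w_m\Xi^{k,m}_t\) is again an e-detector, finite at deterministic times under each null; Lemma~\ref{lem:bounded-stopping-edetector} applies unchanged. Calibrating with the exponential clock exactly as in \eqref{eq:calibrated-detector} gives \eqref{eq:calibrated-arl} by the same argument.

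The delay bound is where the threshold-uniformity appears. Fix \(Q\) and choose \(m\) with \(\beta:=Q^{n_m}(B_m)>0\). Suppose the change occurs at \(\nu\), and consider the component with restart \(k=\nu\) and index \(m\). The first time some post-change block \((X_{\nu+(q-1)n_m},\ldots,X_{\nu+qn_m-1})\) lands in \(B_m\), the running product becomes \(+\infty\). At that grid time \(\widetilde\Detector\ge(1-\rho)w_m\cdot\infty\ge\gamma\) for every \(\gamma\). One detail needs care: once the infinite factor has appeared, later blocks in \(B_m^c\) contribute factor \(1\), so the product stays infinite on this grid, but \(\Xi\) is zero off the grid. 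This is harmless, because \(T_\gamma\) is a first crossing and has already occurred. Hence
\[
  (T_\gamma-\nu+1)^+\le n_m\,\sigma,
\]
where \(\sigma\) is the first \(q\) such that the \(q\)th post-change block lies in \(B_m\). Conditional on \(\F_{\nu-1}\), \(\sigma\) is geometric with success probability \(\beta\), so \(\E[\sigma\mid\F_{\nu-1}]=1/\beta\). This bound holds independently of \(P\), \(\nu\), the pre-change history and \(\gamma\), and therefore \(\sup_\gamma\mathcal J_Q(T_\gamma)\le n_m/\beta\).

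The main obstacle I expect is making the infinite-valued factor rigorous within the paper's framework, which allows values in \([0,\infty]\). I need to check that the conditional supermartingale and optional-sampling step survives when \(G\) takes the value \(\infty\) on a null-probability set. It should, because under \(P^\infty\) the event \(\{\text{some block}\in B_m\}\) has probability zero, so the product equals \(1\) almost surely on the grid. If one prefers finite values, the fallback is to mix over factors \(1+2^r\one_{B_m}\) with weights \(w_{m,r}\). A single hit then produces \(\widetilde\Detector\ge(1-\rho)w_{m,r}2^r\), and this exceeds \(\gamma\) for large \(r\) only if \(w_{m,r}2^r\to\infty\). The weights \(w_{m,r}\propto 2^{-m}r^{-2}\) achieve this and still sum to one. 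This also yields the uniform bound, with every \(\gamma\) handled by some \(r\), and avoids extended values altogether.
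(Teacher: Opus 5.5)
Your proposal is correct. It shares the paper's skeleton: the restarted mixture detector from the proof of Theorem~\ref{thm:change-detection}, the exponential calibration clock for \eqref{eq:calibrated-arl}, and a conditional count of post-change blocks landing in \(B_m\) for the restart at \(k=\nu\).

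Where you differ is in how the bets are sized. The paper keeps every factor finite: it uses \(G_{m,\ell}=1+\ell\one_{B_m}\) with weights \(v_mc\ell^{-2}\), and for each \(\gamma\) it singles out one component \(\ell(\gamma)\). Because \(\ell^{-2}(1+\ell)^j\) stays bounded for \(j\le2\), that component needs three successes. The negative-binomial mean then gives \(\mathcal J_Q(T_\gamma)\le3n_m/p\).

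You take one of two routes instead.
\begin{itemize}
\item You bet everything on \(B_m\), via the factor \(1+\infty\cdot\one_{B_m}\). This is legitimate precisely because \(P^{n_m}(B_m)=0\) makes every block product equal to one almost surely under each null, so the extended values never enter a null expectation.
\item In your fallback, you use bets \(2^r\) with weights \(\propto2^{-m}r^{-2}\). Then \((1-\rho)w_{m,r}(1+2^r)\ge\gamma\) for some \(r\), and a single success suffices.
\end{itemize}
Either way the count is geometric, and you get the sharper bound \(\sup_\gamma\mathcal J_Q(T_\gamma)\le n_m/\beta\).

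What each approach buys: the paper's version reuses the bounded-factor bookkeeping of the theorem literally. Yours gives a shorter argument and a better constant, and it makes transparent why a sure certificate yields threshold-free delay: it licenses an all-in bet.

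Your single-hit reasoning also applies to the paper's own detector. At the grid time of the first success for the restart \(k=\nu\), the components \((m,\ell)\), \(\ell\ge1\), jointly contribute at least \(v_mc\sum_\ell\ell^{-2}(1+\ell)=\infty\). So the paper's three-success analysis is conservative.

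In a write-up, commit to one of your variants rather than listing several.
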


\begin{proof}
For each \(m\) and integer \(\ell\ge1\), use the factor
\[
  G_{m,\ell}=1+\ell\one_{B_m}.
\]
It has expectation one under every null.  Construct one \emph{fixed} restarted detector containing all pairs \((m,\ell)\), with weights \(v_m c\ell^{-2}\), where \(v_m>0\), \(\sum_mv_m=1\), and \(c^{-1}=\sum_{\ell\ge1}\ell^{-2}\).  Fix \(Q\), choose \(m\) with \(p=Q^{n_m}(B_m)>0\), and fix \(\rho=(1+\eta)^{-1}\).  The detector itself does not depend on \(\gamma\); only the analysis selects, for each threshold, an integer \(\ell=\ell(\gamma)\) such that
\[
  (1-\rho)v_mc\ell^{-2}(1+\ell)^3\ge\gamma.
\]
Such an integer exists because \(\ell^{-2}(1+\ell)^3\to\infty\).  Three successful blocks are the first power for which this happens: \(\ell^{-2}(1+\ell)^j\) remains bounded for \(j\le2\).  The selected component therefore crosses after its third successful post-change block.  Conditional on any pre-change history and on a change at time \(\nu\), the successive post-change block events are i.i.d. Bernoulli with success probability \(p\).  The number of blocks required for the third success consequently has negative-binomial mean \(3/p\), independently of \(P\), \(\nu\), the history, \(\ell\), and \(\gamma\).  Taking the conditional essential supremum therefore gives
\[
  \mathcal J_Q(T_\gamma)\le \frac{3n_m}{p}
\]
for every \(\gamma\ge2\).  The calibration clock can only decrease the stopping time.
\end{proof}

\begin{remark}[Why the converse is not immediate]\label{rem:sure-converse}
The alarm-family-to-test argument in \eqref{eq:detector-to-test} does not resolve the converse.  Even if \(\sup_\gamma\mathcal J_Q(T_\gamma)<\infty\), it produces tests whose \(Q\)-power tends to one but whose worst-case null size is bounded only by
\[
  \frac{\gamma-1}{2\gamma}\longrightarrow\frac12,
\]
not by a quantity tending to zero.  Thus bounded detection delay does not automatically yield a zero-null-probability finite-block certificate; a different idea would be required.
\end{remark}

\section{Applications of the finite-block criterion}\label{sec:applications}

The following consequences require no additional sequential arguments: it suffices to exhibit the bounded scores in condition~\textup{(vi)} of Theorem~\ref{thm:main}.  They complement constructive betting procedures for two-sample, independence, and related nonparametric problems \cite{shekhar-ramdas-2024,podkopaev-et-al-2023,podkopaev-ramdas-2023,pandeva-et-al-2024,shaer-et-al-2023,henzi-law-2024}.

\begin{example}[Universal sequential goodness-of-fit]\label{ex:gof}
Fix \(P_0\in\M_1(\X)\) and suppose \(\B\) is countably generated.  Choose a countable generating algebra \(\{A_j:j\ge1\}\); by the \(\pi\)-\(\lambda\) theorem it is measure determining.  Standard Borel spaces are an important special case.  Take
\[
  \Pcal=\{P_0\},
  \qquad
  \Qcal=\M_1(\X)\setminus\{P_0\}.
\]
Corollary~\ref{cor:simple-null-one-block} gives the exact criterion.  For each \(j\), use the two one-observation scores
\[
  h_{j,+}(x)=\one_{A_j}(x)-P_0(A_j),
  \qquad
  h_{j,-}=-h_{j,+}.
\]
Their \(P_0\)-expectations are zero.  If \(Q\ne P_0\), then \(Q(A_j)\ne P_0(A_j)\) for some \(j\), and one sign has positive \(Q\)-expectation.  Thus one power-one sequential goodness-of-fit test works against every fixed alternative \(Q\ne P_0\); by Theorem~\ref{thm:change-detection}, the same certificates yield pointwise \(O_Q(\log\gamma)\) detection of every fixed change away from \(P_0\).
\end{example}

\begin{example}[Sequential two-sample testing]\label{ex:two-sample}
Suppose each round produces an independent pair \(Z_i=(X_i,Y_i)\) with law \(P\otimes Q\), and let \(\{A_j:j\ge1\}\) be countable and measure determining on \((\X,\B)\).  This is the paired, equal-rate sampling formulation; an arbitrary interleaving of two sample streams is not itself i.i.d. on one fixed pair space and requires separate bookkeeping.  On the pair space take
\[
  \Pcal_{2}:=\{R\otimes R:R\in\M_1(\X)\},
  \qquad
  \Qcal_{2}:=\{P\otimes Q:P\ne Q\}.
\]
The scores
\[
  h_{j,\pm}(x,y):=\pm\bigl(\one_{A_j}(x)-\one_{A_j}(y)\bigr)
\]
have expectation zero under every null \(R\otimes R\).  If \(P\ne Q\), some \(A_j\) satisfies \(P(A_j)\ne Q(A_j)\), and the appropriate sign has positive expectation under \(P\otimes Q\).  Hence Theorem~\ref{thm:main} gives a universal power-one sequential two-sample test, and Theorem~\ref{thm:change-detection} gives a pointwise \(O_Q(\log\gamma)\) delay bound for a change from equal to unequal marginals.
\end{example}

\begin{example}[Sequential independence testing]\label{ex:independence}
Let \((\X,\B_X)\) and \((\mathsf Y,\B_Y)\) admit countable generating \(\pi\)-systems \(\{A_j\}\) and \(\{B_k\}\) containing the respective whole spaces.  The observations are i.i.d. pairs \(Z_i=(X_i,Y_i)\).  The null class consists of all product laws, and the alternative class of all nonproduct laws \(Q\) on \(\X\times\mathsf Y\).  For two paired observations define
\[
  h_{j,k}(Z_1,Z_2)
  :=\frac12\bigl(\one_{A_j}(X_1)-\one_{A_j}(X_2)\bigr)
  \bigl(\one_{B_k}(Y_1)-\one_{B_k}(Y_2)\bigr).
\]
Every product law gives expectation zero, while under an arbitrary \(Q\),
\[
  \E_{Q^2}h_{j,k}
  =Q(A_j\times B_k)-Q_X(A_j)Q_Y(B_k).
\]
If this vanished for every \(j,k\), the \(\pi\)-\(\lambda\) theorem would imply \(Q=Q_X\otimes Q_Y\).  Hence for every nonproduct \(Q\), one of \(h_{j,k}\) or \(-h_{j,k}\) has positive expectation.  Theorem~\ref{thm:main} therefore yields a universal power-one sequential independence test, and Theorem~\ref{thm:change-detection} yields a pointwise \(O_Q(\log\gamma)\) delay bound for the onset of dependence.
\end{example}

\section{Conclusion}\label{sec:conclusion}

Power-one sequential testing and i.i.d. change detectability under optional-horizon ARL control have the same qualitative obstruction.  A procedure exists exactly when every alternative is detected by at least one member of a countable family of finite-block certificates.  Fixed-scale e-detector universality shows that the change-detection criterion is an operational condition on alarm times rather than a restriction to an e-detector construction, while the proof supplies the stronger conclusion that one common e-detector generates the calibrated family.  The finite-block condition can be expressed through events, randomized tests, bounded scores, bounded estimable functionals of finite degree, a reduced-time test supermartingale, or a fine-time divergent e-process.  It also has a compact geometric representation: after subdividing by rational margins, the alternative is a countable union of subfamilies whose finite-block weak-$*$ closed convex hulls are positively separated in total variation from the corresponding null hulls.

The additional results sharpen this picture.  One uniform block certificate is equivalent to uniform consistency, a uniform \(O(\log(1/\alpha))\) expected-sample-size bound, and a uniform \(O(\log\gamma)\) change-detection delay bound under optional-horizon ARL control.  Pointwise-only testability corresponds to a genuinely countable cover, while the absence of such a cover makes both power-one testing and pointwise change detectability in this operational sense impossible.  Degree two suffices whenever testability holds on a countable discrete sample space and for every alternative outside a weakly compact null on a Polish space; simple nulls need only degree one.  The examples and the goodness-of-fit, two-sample, and independence applications show how these abstract statements reduce to elementary finite-block identities in familiar problems.

Several structural questions remain.  Does every sequentially testable i.i.d. problem admit a divergent test supermartingale at every ordinary time, rather than only on a deterministic reduced-time filtration with \(t_k/k\to1\)?  Does the estimable-degree hierarchy always collapse at two, or can a composite noncompact null outside the countable-discrete regimes force higher degree?  On a standard Borel space, can individually testable alternatives fail to assemble countably for a composite null?  Finally, does threshold-uniform detection delay force a sure finite-block certificate?  Beyond the i.i.d. model, controlled experiments and dependent observations will require adaptive or conditional analogues of the finite-block certificates.  The information lower bounds and recent GROW duality point toward a complementary quantitative theory of sharp logarithmic constants.

\subsection*{Acknowledgments}

Starting from core ideas of the author, we acknowledge the use of GPT-Pro 5.6 for brainstorming further equivalent conditions, writing related work and prose, and checking proofs, though the author retains responsibility for correctness, importance and relevance of the content.

\begingroup\small\sloppy\endgroup

\end{document}